\title{Distance Powers and Distance Matrices of Integral Cayley Graphs over Abelian Groups}
\author{Walter Klotz\\
{\small Institut f\"ur Mathematik}\\[-0.8ex]
{\small Technische Universit\"at Clausthal, Germany}\\[-0.8ex]
{\small \texttt{klotz@math.tu-clausthal.de}}\\
Torsten Sander\\
\small Fakult\"at f\"ur Informatik\\[-0.8ex]
\small Ostfalia Hochschule f\"ur angewandte Wissenschaften, Germany\\[-0.8ex]
\small \texttt{t.sander@ostfalia.de}
}
\date{May 21, 2012\\
\small  Mathematics Subject Classification: 05C25, 05C50}
\newtheorem{theorem}{Theorem}
\newtheorem{lemma}{Lemma}
\newtheorem{corollary}{Corollary}
\newtheorem{proposition}{Proposition}                                                       
\newcommand{\Atom}{\operatorname*{Atom}}
\newcommand{\ord}{\operatorname{ord}}
\newcommand{\lcm}{\operatorname{lcm}}
\newcommand{\Cay}{\operatorname{Cay}}
\newcommand{\DM}{\operatorname{DM}}
\newcommand{\Q}{\mathbb{Q}}
\newcommand{\C}{\mathbb{C}}
\newcommand{\Z}{\mathbb{Z}}
\newenvironment{casemarker}%
{\noindent\ignorespaces}%
{\par\noindent%
\ignorespacesafterend}
\begin{document}

\maketitle

\begin{abstract}
It is shown that distance powers of an integral Cayley graph over an abelian group $\Gamma$
are again integral Cayley graphs over $\Gamma$. Moreover, it is proved that distance
matrices of integral Cayley graphs over abelian groups have integral spectrum.
\end{abstract}


\section{Introduction}

Eigenvalues of an undirected graph $G$ are the eigenvalues of an arbitrary adjacency matrix of $G$.
General facts about graph spectra can e.g.~be found in \cite{cvetkovic} or \cite{vandam}.
Harary and Schwenk \cite{harary} defined $G$ to be {\em integral} if all of its eigenvalues
are integers. For a survey of integral graphs see \cite{balinska}.
In \cite{ahmadi} the number of integral graphs on $n$ vertices is estimated.
Known characterizations of integral graphs are restricted to certain graph classes, see e.g.~%
\cite{abdollahi}, \cite{klotz2}, or \cite{so}. 
Here we concentrate on integral Cayley graphs over abelian groups and their distance powers.

Let $\Gamma$ be a finite, additive group, $S \subseteq \Gamma,~-S=\{-s:~s\in S\}=S$.
The undirected {\em Cayley graph over $\Gamma$ with shift set (or symbol) $S$}, $\Cay(\Gamma,S)$, 
has vertex set $\Gamma$. Vertices $a,~b\in \Gamma$ are adjacent if and only if $a-b \in S$.
For general properties of Cayley graphs we refer to Godsil and Royle \cite{godsil} or Biggs \cite{biggs}.
Note that $0\in S$ generates a loop at every vertex of $\Cay(\Gamma,S)$. Many definitions of
Cayley graphs exclude this case, but its inclusion saves us from sacrificing clarity of
presentation later on.

In our paper \cite{klotz1} we proved for an abelian group $\Gamma$ that Cay$(\Gamma,S)$ is integral
if $S$ belongs to the Boolean algebra $B(\Gamma)$ generated by the subgroups of $\Gamma$.
Our conjecture that the converse is true for all integral Cayley graphs over abelian groups has recently 
been proved by Alperin and Peterson \cite{alperin}.

Let $G=(V,E)$ be an undirected graph with vertex set $V$ and edge set $E$, $D$ a finite set of 
nonnegative integers. The {\em distance power} $G^D$ of $G$ is an undirected graph with vertex set $V$.
Vertices $x$ and $y$ are adjacent in $G^D$, if their distance $d(x,y)$ in $G$ belongs to $D$.
We prove that if $G$ is an integral Cayley graph over the abelian group $\Gamma$, then every
distance power $G^D$ is also an integral Cayley graph over $\Gamma$. Moreover, we show that
in a very general sense distance matrices of integral Cayley graphs over abelian groups have 
integral spectrum. This extends an analogous result of Ili\'{c} \cite{ilic} for integral circulant 
graphs, which are the integral Cayley graphs over cyclic groups. Finally, we show that the class of gcd-graphs,
another subclass of integral Cayley graphs over abelian groups (see \cite{klotz2}), is also closed
under distance power operations.

In our proofs we apply the mentioned result and techniques of Alperin and Peterson.
To make this paper more selfcontained and to draw additional attention to this beautiful result,
we include a proof reduced to our purposes on a more combinatorial level. 


\section{The Boolean Algebra $B(\Gamma)$}

Let $\Gamma$ be an arbitrary finite, additive group. We collect facts about the 
Boolean algebra $B(\Gamma)$ generated by the subgroups of $\Gamma$. 

\subsection{Atoms of $B(\Gamma)$}

Let us determine the second minimal elements of $B(\Gamma)$. To this end, we consider elements of $\Gamma$
to be equivalent, if they generate the same cyclic subgroup. The equivalence classes of this relation
partition $\Gamma$ into nonempty disjoint subsets. We shall call these sets {\em atoms}.
The atom represented by $a\in \Gamma$, $\Atom(a)$, consists of the generating elements 
of the cyclic group $\left<a\right>$.
\[
\begin{split}
  \Atom(a) &= \{ b\in\Gamma:~\left< a\right> = \left< b\right>\} \\
           &= \{ ka:~k\in \Z,~1\leq k\leq \ord_\Gamma(a),~ \gcd(k, \ord_\Gamma(a))=1\}.
\end{split}
\]
Here, $\Z$ stands for the set of all integers. For a positive integer $k$ and $a\in \Gamma$ we denote 
as usual by $ka$ the $k$-fold sum of terms $a$, $(-k)a=-(ka),~0a=0$. By $\ord_\Gamma(a)$ 
we mean the order of $a$ in $\Gamma$.

Each set $\Atom(a)$ can be obtained  by removing from $\left<a\right>$ 
all elements of its proper subgroups. We bear in mind that every set $S\in B(\Gamma)$ 
can be derived from the cyclic subgroups of $\Gamma$ by means of repeated union, intersection and
complement (with respect to $\Gamma$). Thus we easily arrive at the following propositions.

\begin{proposition}\label{prop1a}
For every $a\in \Gamma$ and every $S\in B(\Gamma)$ holds:
$$\Atom(a) \subseteq S \text{ or } \Atom(a)\subseteq \bar{S}=\Gamma \setminus S.$$
\end{proposition}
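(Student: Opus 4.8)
The plan is to argue by induction on the way $S$ is constructed from subgroups of $\Gamma$ by the operations union, intersection and complement, exactly as anticipated in the paragraph preceding the proposition. Fix $a \in \Gamma$ and call a subset $T \subseteq \Gamma$ \emph{$a$-good} if $\Atom(a) \subseteq T$ or $\Atom(a) \subseteq \bar{T}$. Showing that every $S \in B(\Gamma)$ is $a$-good, for every $a$, is precisely the assertion of the proposition.

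First I would dispose of the base case: every subgroup $H$ of $\Gamma$ is $a$-good. If $\Atom(a) \cap H = \emptyset$, then $\Atom(a) \subseteq \bar{H}$ and we are done. Otherwise choose $b \in \Atom(a) \cap H$. By the description of $\Atom(a)$ as the set of generators of $\langle a \rangle$ we have $\langle b \rangle = \langle a \rangle$, and since $b \in H$ this forces $\langle a \rangle = \langle b \rangle \subseteq H$, whence $\Atom(a) \subseteq \langle a \rangle \subseteq H$. In particular this handles the cyclic subgroups, which already generate $B(\Gamma)$, though the argument works verbatim for arbitrary subgroups.

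Next I would verify that the family $\mathcal{G}_a$ of all $a$-good subsets of $\Gamma$ is closed under the Boolean operations. Closure under complement is immediate from the symmetry of the definition. For union, let $T_1, T_2 \in \mathcal{G}_a$: if $\Atom(a) \subseteq T_i$ for some $i$ then $\Atom(a) \subseteq T_1 \cup T_2$; otherwise $\Atom(a) \subseteq \bar{T_1} \cap \bar{T_2} = \overline{T_1 \cup T_2}$. Closure under intersection follows from the two previous cases via De Morgan's law. Thus $\mathcal{G}_a$ is a Boolean subalgebra of the power set of $\Gamma$ containing all subgroups, so it contains the Boolean algebra $B(\Gamma)$ generated by them, which yields $\Atom(a) \subseteq S$ or $\Atom(a) \subseteq \bar{S}$ for every $S \in B(\Gamma)$.

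There is no real obstacle here; the one point meriting a moment's care is the base case, where the crucial observation is that all generators of $\langle a \rangle$ generate the \emph{same} cyclic group, so that the presence of a single such generator inside a subgroup $H$ pulls all of $\langle a \rangle$ — and hence all of $\Atom(a)$ — into $H$.
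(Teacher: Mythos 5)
Your proof is correct and follows exactly the route the paper intends: the paper merely sketches that the proposition follows because every $S\in B(\Gamma)$ is built from (cyclic) subgroups by union, intersection and complement, and your argument fills in that sketch with the right base case (a subgroup containing one generator of $\left<a\right>$ contains all of $\left<a\right>$) and the closure of the family of $a$-good sets under the Boolean operations. Nothing is missing.
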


\begin{proposition}\label{prop1b}
For $a\in \Gamma$ let 
$\{U_1,\ldots ,U_k\}$ be the family of proper subgroups of $\left<a\right>$.
Then we have
$$\Atom(a)~=~\left<a\right>\setminus ( U_1 \cup \ldots \cup U_k).$$
\end{proposition}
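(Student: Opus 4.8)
The plan is to prove the two inclusions of the claimed set identity directly from the definition of $\Atom(a)$ as the set of generators of $\langle a\rangle$, invoking only the elementary fact that any subgroup of the cyclic group $\langle a\rangle$ equals $\langle a\rangle$ itself or else is one of its proper subgroups $U_1,\ldots,U_k$.

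For the inclusion $\Atom(a)\subseteq\langle a\rangle\setminus(U_1\cup\cdots\cup U_k)$, I would take $b\in\Atom(a)$, so that $\langle b\rangle=\langle a\rangle$ and in particular $b\in\langle a\rangle$. If $b$ belonged to some $U_i$, then $\langle b\rangle\subseteq U_i\subsetneq\langle a\rangle$ would contradict $\langle b\rangle=\langle a\rangle$; hence $b$ lies in none of the $U_i$, which gives the inclusion.

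For the reverse inclusion, I would take $b\in\langle a\rangle$ with $b\notin U_i$ for every $i$. Then $\langle b\rangle$ is a subgroup of $\langle a\rangle$; it cannot be a proper one, for otherwise it would appear among $U_1,\ldots,U_k$ and the element $b\in\langle b\rangle$ would lie in some $U_i$, contrary to the choice of $b$. Thus $\langle b\rangle=\langle a\rangle$, i.e.\ $b\in\Atom(a)$.

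Honestly there is no real obstacle here: the statement is essentially a reformulation of what it means to be a generator of a cyclic group. The only point meriting a word of care is the degenerate situation in which $\langle a\rangle$ has no proper subgroups other than $\{0\}$ — that is, $a=0$ or $\ord_\Gamma(a)$ is prime — where the family $\{U_1,\ldots,U_k\}$ is empty or reduces to $\{\{0\}\}$; with the usual convention that an empty union equals $\emptyset$, the displayed formula remains correct, and I would remark on this explicitly to keep the argument airtight.
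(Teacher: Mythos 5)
Your proof is correct and matches the paper's intent exactly: the paper states this proposition without a formal proof, remarking only that $\Atom(a)$ is obtained by removing from $\left<a\right>$ the elements of its proper subgroups, and your two-inclusion argument (plus the remark on the degenerate cases) simply makes that observation precise.
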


\begin{proposition}\label{prop1c}
For an arbitrary finite group $\Gamma$ the following statements are true:
\begin{enumerate}
\item $\Atom(a) \in B(\Gamma)$ for every $a\in \Gamma$.
\item For no $a\in \Gamma$ there exists a nonempty proper subset of $\Atom(a)$ that belongs to $B(\Gamma)$.
\item Every nonempty set $S\in B(\Gamma)$ is the union of some sets $\Atom(a),~a\in \Gamma$.
\end{enumerate}
\end{proposition}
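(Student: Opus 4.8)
The plan is to obtain all three items as short consequences of Propositions~\ref{prop1a} and~\ref{prop1b} together with the defining closure properties of $B(\Gamma)$: it is closed under finite union, finite intersection and complementation relative to $\Gamma$, and it contains every subgroup of $\Gamma$, in particular $\{0\}$ and $\Gamma$ itself.

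For item~(1) I would simply read off $\Atom(a)$ from Proposition~\ref{prop1b}. If $\langle a\rangle$ has proper subgroups $U_1,\dots,U_k$, then
\[
\Atom(a)=\langle a\rangle\setminus(U_1\cup\dots\cup U_k)=\langle a\rangle\cap\overline{U_1\cup\dots\cup U_k},
\]
and since $\langle a\rangle$ and each $U_i$ is a cyclic subgroup of $\Gamma$, hence a member of $B(\Gamma)$, closure under union, intersection and complement places $\Atom(a)$ in $B(\Gamma)$. The only degenerate case is $a=0$, where $\langle a\rangle=\{0\}$ has no proper subgroup and $\Atom(0)=\{0\}\in B(\Gamma)$ trivially.

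For item~(3) I would use that the atoms partition $\Gamma$. Let $S\in B(\Gamma)$ be nonempty and take any $x\in S$. Applying Proposition~\ref{prop1a} to $a=x$ gives $\Atom(x)\subseteq S$ or $\Atom(x)\subseteq\overline S$; the second option is impossible because $x\in\Atom(x)\cap S$. Hence every $x\in S$ satisfies $\Atom(x)\subseteq S$, and therefore $S=\bigcup_{x\in S}\Atom(x)$ exhibits $S$ as a union of atoms.

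For item~(2), the minimality claim, I would argue by contradiction. Suppose $T\in B(\Gamma)$ with $\emptyset\neq T\subsetneq\Atom(a)$, and pick $b\in T$. Since $b\in\Atom(a)$ we have $\langle b\rangle=\langle a\rangle$, so $\Atom(b)=\Atom(a)$. Proposition~\ref{prop1a} applied to $b$ and $T$ yields $\Atom(a)=\Atom(b)\subseteq T$ or $\Atom(b)\subseteq\overline T$; the latter contradicts $b\in T$, so $\Atom(a)\subseteq T$, contradicting $T\subsetneq\Atom(a)$. I do not expect a genuine obstacle here: once Propositions~\ref{prop1a} and~\ref{prop1b} are in hand each part is essentially a one-line deduction; the only points needing a little care are the case $a=0$ in~(1) and, in~(2), making sure Proposition~\ref{prop1a} is applied to a witness $b$ actually lying in $T$, so that the alternative $\Atom(b)\subseteq\overline T$ can be excluded.
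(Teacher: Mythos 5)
Your proof is correct and follows exactly the route the paper intends: the paper states Proposition~\ref{prop1c} as an easy consequence of Propositions~\ref{prop1a} and~\ref{prop1b} together with the closure of $B(\Gamma)$ under union, intersection and complement, which is precisely what you spell out. All three deductions (including the $a=0$ case and the choice of a witness $b\in T$ in item~(2)) are sound.
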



\subsection{Sums of Sets in $B(\Gamma)$}

In this subsection $\Gamma$ denotes a finite, additive, abelian group.
We define the sum of nonempty subsets $S,~T$ of $\Gamma$:
$$S+T~=~\{s+t:~s\in S,~t\in T\}.$$
We are going to show that the sum of sets in $B(\Gamma)$ is again a set in $B(\Gamma)$.

\begin{lemma}\label{lem:atomsum}
If $\Gamma$ is a finite abelian group and $a,~b\in\Gamma$ then
$$\Atom(a)+\Atom(b)~\in~B(\Gamma).$$
\end{lemma}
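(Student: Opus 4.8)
The plan is to reduce the claim to a statement about cyclic subgroups and then exploit the structure of atoms as described in Proposition~\ref{prop1b}. First I would observe that $\langle a\rangle + \langle b\rangle$ is itself a subgroup of $\Gamma$ (since $\Gamma$ is abelian), hence lies in $B(\Gamma)$; the work is to control the ``correction terms'' that arise when we pass from full cyclic subgroups to their atoms. Concretely, writing $\Atom(a) = \langle a\rangle \setminus \bigcup_i U_i$ and $\Atom(b) = \langle b\rangle \setminus \bigcup_j V_j$ with $U_i,V_j$ the proper subgroups of $\langle a\rangle$, $\langle b\rangle$ respectively, I want to express $\Atom(a)+\Atom(b)$ in terms of sums of subgroups, intersections, unions and complements — all operations under which $B(\Gamma)$ is closed.

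The key step is an inclusion–exclusion-type identity. I expect that $\Atom(a)+\Atom(b)$ can be written as $(\langle a\rangle + \langle b\rangle)$ minus a union of sets of the form $U + \langle b\rangle$, $\langle a\rangle + V$, and $U+V$ for proper subgroups $U<\langle a\rangle$, $V<\langle b\rangle$; each such summand of subgroups is again a subgroup of $\Gamma$, hence in $B(\Gamma)$, so the whole expression lands in $B(\Gamma)$. The subtlety is that $(\langle a\rangle\setminus\bigcup U_i) + (\langle b\rangle\setminus\bigcup V_j)$ is \emph{not} literally $(\langle a\rangle+\langle b\rangle)\setminus(\text{sum of the removed parts})$, because sumset formation does not distribute over set difference. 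So I would instead argue semantically: an element $g$ lies in $\Atom(a)+\Atom(b)$ iff there exist a generator $x$ of $\langle a\rangle$ and a generator $y$ of $\langle b\rangle$ with $g=x+y$. I would try to show this membership condition is equivalent to $g\in \langle a\rangle+\langle b\rangle$ together with the failure of $g$ to lie in certain ``bad'' cosets; proving that equivalence — in particular that membership forces the existence of \emph{simultaneous} generators rather than just a decomposition avoiding each individual proper subgroup — is the main obstacle.

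To handle that obstacle I would reduce to the case where $\langle a\rangle$ and $\langle b\rangle$ have coprime orders versus the general case, using the primary (Sylow) decomposition of the cyclic groups: an element of a cyclic group is a generator iff its projection to each primary component is a generator. This lets me work prime by prime. For a single prime $p$, $\langle a\rangle$ and $\langle b\rangle$ become cyclic $p$-groups, one contained in the other, and $\Atom$ is just ``the subgroup minus its unique maximal subgroup''; here the sumset $\Atom(a)+\Atom(b)$ should be straightforward to compute directly, and I can check it equals the larger group minus its maximal subgroup, or the full larger group, depending on the relative orders. Reassembling the primes via the Chinese Remainder Theorem and the fact that $B(\Gamma)$ is closed under the relevant operations then yields the result. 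An alternative, cleaner route I would also consider: use Proposition~\ref{prop1c}(3) to note it suffices to show $\Atom(a)+\Atom(b)$ is a union of atoms, equivalently (by Proposition~\ref{prop1a}) that it is ``atom-saturated'' — if $x+y\in\Atom(a)+\Atom(b)$ and $\Atom(z)=\Atom(x+y)$ then $z\in\Atom(a)+\Atom(b)$; this might be provable directly by a multiplier argument, multiplying a generator pair $(x,y)$ by a suitable unit modulo $\lcm(\ord a,\ord b)$, and would avoid explicit inclusion–exclusion altogether.
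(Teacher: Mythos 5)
Your closing ``alternative route'' is in fact the paper's actual strategy: show that $\Atom(a)+\Atom(b)$ is a union of atoms (Proposition~\ref{prop1c}(3)) by taking $w=x+y$ with $x\in\Atom(a)$, $y\in\Atom(b)$ and a multiplier $\nu$ coprime to $\ord_\Gamma(w)$, and producing $\nu w$ again as a sum of a generator of $\left<a\right>$ and a generator of $\left<b\right>$. But you have left out exactly the step that carries all the difficulty. The obvious move $\nu w=(\nu\lambda)a+(\nu\mu)b$ only works if $\nu$ is coprime to $\ord_\Gamma(a)$ and $\ord_\Gamma(b)$, and coprimality to $\ord_\Gamma(w)$ does \emph{not} give this: $\ord_\Gamma(w)$ can miss some of the primes dividing $\ord_\Gamma(a)\ord_\Gamma(b)$ (already for $\Gamma=\Z_2$, $a=b=1$, $w=0$), so $\nu$ may be divisible by such a prime. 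Saying ``multiply by a suitable unit modulo $\lcm(\ord a,\ord b)$'' names the object you need but gives no reason it exists compatibly with $\nu$; the paper's entire Case~2 is the explicit construction $\nu'=\nu+p_1^{\alpha_1}\cdots p_l^{\alpha_l}$, chosen so that $\nu'w=\nu w$ (reduction of the $i$-th coordinate modulo $p_i^{\alpha_i}$) while $\gcd(\nu',p_1\cdots p_r)=1$. Without that lifting argument the proof is not complete.

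Your main proposed route has an additional concrete error: in a $p$-primary component that is not cyclic, two cyclic subgroups $\left<a_i\right>$ and $\left<b_i\right>$ need not be nested (take $\left<(1,0)\right>$ and $\left<(0,1)\right>$ in $\Z_p\oplus\Z_p$), so the base-case computation ``larger group minus its maximal subgroup, or the full larger group'' is wrong as stated. That said, the prime-by-prime reduction itself is salvageable and would give a genuinely different (and arguably cleaner) proof than the paper's: one can check that $\Atom_\Gamma(a)$ is the Cartesian product of the $\Atom_{\Gamma_i}(a_i)$ over the primary components, that products of sets from the $B(\Gamma_i)$ lie in $B(\Gamma)$, and that within a single $p$-component the multiplier argument is trivial because any $\nu$ coprime to $\ord(w)$ is automatically prime to $p$ (this is the paper's easy Case~1; the reduction eliminates Case~2 entirely). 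The inclusion--exclusion identity with sums of subgroups, which you yourself flag as the main obstacle, I would abandon: the complement of $\Atom(a)+\Atom(b)$ in $\left<a\right>+\left<b\right>$ is not a union of sets $U+\left<b\right>$, $\left<a\right>+V$, and no argument is offered that it is.
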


\begin{proof}
We know that $\Gamma$ can be represented (see Cohn \cite{cohn}) as a direct sum of cyclic groups 
of prime power order. This can be grouped as
$$\Gamma = \Gamma_1 \oplus \Gamma_2 \oplus \cdots \oplus \Gamma_r,$$
where $\Gamma_i$ is a direct sum of cyclic groups, the order of which is a power of a prime $p_i$,
$|\Gamma_i|=p_i^{\alpha_i}$, $\alpha_i\geq 1$ for $i=1,\ldots,r$ and $p_i\neq p_j$ for $i\neq j$. 
Hence we can write each element $x\in \Gamma$ as an $r$-tuple $(x_i)$ with
$x_i\in \Gamma_i$ for $i=1,\ldots,r$.

The order of $x_i\in \Gamma_i$, $\ord_{\Gamma_i}(x_i)$, is a divisor of $p_i^{\alpha_i}$.
Therefore, integer factors in the $i$-th coordinate of $x$ may be reduced modulo $p_i^{\alpha_i}$.
The order of $x\in \Gamma$, $\ord_\Gamma(x)$, is the least common multiple of the orders of its
coordinates:
\begin{equation}\label{eq1}
\ord_\Gamma(x)~=~\lcm(\ord_{\Gamma_1}(x_1),\ldots,\ord_{\Gamma_r}(x_r)).
\end{equation}
This implies that all prime divisors of $\ord_\Gamma(x)$ belong to $\{p_1,\ldots,p_r\}$.

Let $a=(a_i),~b=(b_i)$ be elements of $\Gamma$. The statement of the lemma becomes trivial for
$a=0$ or $b=0$. So we may assume $a\neq 0$ and $b\neq 0$. An arbitrary element $w\in \Atom(a)+\Atom(b)$
has the following form:
\begin{equation}\label{eq2}
\begin{split}
&w~=~\lambda a+\mu b,\\
&1\leq\lambda \leq \ord_\Gamma(a),~~\gcd(\lambda,\ord_\Gamma(a))=1,\\
&1\leq\mu \leq \ord_\Gamma(b),~~\gcd(\mu,\ord_\Gamma(b))=1. 
\end{split}
\end{equation}
We have to show $\Atom(w) \subseteq \Atom(a) + \Atom(b)$. To this end, we choose the integer $\nu$ with
$1\leq \nu \leq \ord_\Gamma(w)$, $\gcd(\nu,\ord_\Gamma(w))=1$, and show $\nu w \in \Atom(a)+\Atom(b)$.\\

\begin{casemarker}
Case 1.~~ $(p_1p_2\cdots p_r) \mid \ord_\Gamma(w)$.\\
\end{casemarker}

By $\gcd(\nu,\ord_\Gamma(w))=1$ we know that $\nu$ has no prime divisor in $\{p_1,\ldots,p_r\}$.
On the other hand all prime divisors of $\ord_\Gamma(a)$ and of $\ord_\Gamma(b)$ are in $\{p_1,\ldots,p_r\}$.
This implies $\gcd(\nu,\ord_\Gamma(a))=1$ and $\gcd(\nu,\ord_\Gamma(b))=1$. Setting $\lambda'=\nu \lambda$
and $\mu'=\nu \mu$ we achieve
\[\begin{split}
&\gcd(\lambda',\ord_\Gamma(a))=1,~\lambda' a\in \Atom(a),\\
&\gcd(\mu',\ord_\Gamma(b))=1,~\mu' b\in \Atom(b).
\end{split}
\]
Now we have by (2):
$$\nu w = \nu \lambda a +\nu \mu b = \lambda' a + \mu' b \in \Atom(a)+\Atom(b).$$

\begin{casemarker}
Case 2.~~ $(p_1p_2\cdots p_r) \centernot\mid \ord_\Gamma(w)$.\\
\end{casemarker}

Trivially, for $w=0\in \Atom(a)+\Atom(b)$ we have $\nu w \in \Atom(a)+\Atom(b)$. 
Therefore, we may assume $w\neq 0$. Without loss of generality let
\begin{equation}\label{eq3}
(p_1\cdots p_k)\mid \ord_\Gamma(w),~\gcd(\ord_\Gamma(w),p_{k+1}\cdots p_r)=1,~1\leq k<r.
\end{equation}
Now (\ref{eq1}) and (\ref{eq3}) imply
\begin{equation}\label{eq4}
\begin{split}
w~=~\lambda a +\mu b ~
&=~(\lambda a_1+\mu b_1,\ldots,\lambda a_k+\mu b_k,0,\ldots,0),\\
&\lambda a_i+\mu b_i\neq 0 \text{ for }i=1,\ldots,k.
\end{split}
\end{equation}
By $\gcd(\nu,ord_\Gamma(w))=1$ we know $\gcd(\nu,p_1\cdots p_k)=1$. If even more
 $\gcd(\nu,p_1\cdots p_r)=1$ then we deduce $\nu w\in \Atom(a)+\Atom(b)$ as in Case 1.
So we may assume that $\nu$ has at least one prime divisor in $\{p_{k+1},\ldots,p_r\}$.
Without loss of generality let
$$\gcd(\nu,p_1\cdots p_l)=1,~(p_{l+1}\cdots p_r)\mid \nu,~k\leq l <r.$$
We define
\begin{equation}\label{eq5}
\nu'~=~\nu~+~p_1^{\alpha_1}\cdots p_l^{\alpha_l}.
\end{equation}
If we observe that integer factors in the $i$-th coordinate of $w$ can be reduced
modulo $p_i^{\alpha_i}$, then we see by (\ref{eq4}): $\nu'w=\nu w$.
Moreover, (\ref{eq5}) and the properties of $\nu$ imply $\gcd(\nu',p_1\cdots p_r)=1$.
As in Case 1 we now conclude
$\nu w~=~\nu' w~\in~\Atom(a)+\Atom(b).$
\end{proof}

\begin{theorem}\label{thm1}
If $\Gamma$ is a finite abelian group with nonempty subsets $S, T\in B(\Gamma)$ then $S+T\in B(\Gamma)$.
\end{theorem}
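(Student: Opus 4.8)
The plan is to reduce Theorem~\ref{thm1} to Lemma~\ref{lem:atomsum} by exploiting the structure of $B(\Gamma)$ recorded in Proposition~\ref{prop1c}. The key observation is that $B(\Gamma)$ is closed under finite unions (it is a Boolean algebra) and that, by part~(3) of Proposition~\ref{prop1c}, every nonempty $S\in B(\Gamma)$ decomposes as a finite union of atoms, $S=\bigcup_{a\in A}\Atom(a)$, and likewise $T=\bigcup_{b\in B}\Atom(b)$ for suitable finite index sets $A,B\subseteq\Gamma$. The set sum distributes over unions: one has the elementary identity
\[
  S+T~=~\Big(\bigcup_{a\in A}\Atom(a)\Big)+\Big(\bigcup_{b\in B}\Atom(b)\Big)~=~\bigcup_{a\in A}\bigcup_{b\in B}\big(\Atom(a)+\Atom(b)\big).
\]

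From here the argument is immediate. Each summand $\Atom(a)+\Atom(b)$ lies in $B(\Gamma)$ by Lemma~\ref{lem:atomsum}. Since $B(\Gamma)$ is closed under (finite) union, the finite union on the right-hand side is again a member of $B(\Gamma)$, which is exactly the assertion $S+T\in B(\Gamma)$. I would spell out the distributivity identity in one line (it follows directly from the definition $S+T=\{s+t:s\in S,\,t\in T\}$, since $s+t$ with $s\in\Atom(a)$, $t\in\Atom(b)$ ranges over all of $\Atom(a)+\Atom(b)$ as $a,b$ range over $A,B$), and then invoke Lemma~\ref{lem:atomsum} together with closure of $B(\Gamma)$ under union.

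There is essentially no obstacle here: the genuinely hard work has already been done in Lemma~\ref{lem:atomsum} (the atom-by-atom case analysis in the two prime-divisor cases). The only point requiring a word of care is that the index sets $A$ and $B$ are finite, so that we are taking a \emph{finite} union of sets in $B(\Gamma)$; this is guaranteed because $\Gamma$ itself is finite, hence has only finitely many atoms. Thus the proof of the theorem is just a short combinatorial bookkeeping step built on the lemma.
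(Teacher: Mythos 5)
Your proof is correct and follows exactly the paper's own argument: decompose $S$ and $T$ into atoms via Proposition~\ref{prop1c}, distribute the set sum over the finite unions, apply Lemma~\ref{lem:atomsum} to each pairwise atom sum, and conclude by closure of $B(\Gamma)$ under finite union. No differences worth noting.
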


\begin{proof}
The sets $S$ and $T$ are unions of atoms of $B(\Gamma)$.
$$S~=~\bigcup_{i=1}^k ~\Atom(a_i),~~T~=~\bigcup_{j=1}^l ~\Atom(b_j).$$
Then we have
\begin{equation}\label{eq6}
S+T~=~\bigcup_{1\leq i\leq k, 1\leq j\leq l}~(\Atom(a_i)+\Atom(b_j)).
\end{equation}
According to Lemma \ref{lem:atomsum} the sum $\Atom(a_i)+\Atom(b_j)$ is an element of $B(\Gamma)$. Therefore,
(\ref{eq6}) implies $S+T \in B(\Gamma)$.
\end{proof}

\section{Integral Subsets and Group Characters}

Let $\Gamma$ be a finite, additive group, $f:\Gamma \rightarrow \C$ a complex valued 
function on $\Gamma$. A subset $S\subseteq \Gamma$ is called 
{\em $f$-integral}, cf.~our paper \cite{klotz1}, if
$$f(S)~=~\sum_{s\in S}~f(s)~\in \Z~.$$
We agree upon $f(\emptyset)=0$. So the empty set is always $f$-integral.

\begin{lemma}\label{lem:fint}
If all cyclic subgroups of the finite group $\Gamma$ are $f$-integral, then every set $S\in B(\Gamma)$
is $f$-integral.
\end{lemma}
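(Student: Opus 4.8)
The plan is to reduce the claim for arbitrary $S\in B(\Gamma)$ to the case of atoms and then to cyclic subgroups. By Proposition~\ref{prop1c}(3), every nonempty $S\in B(\Gamma)$ is a disjoint union $S=\Atom(a_1)\cup\cdots\cup\Atom(a_k)$, so $f(S)=\sum_{i=1}^k f(\Atom(a_i))$. Since $\Z$ is closed under finite sums and $f(\emptyset)=0$ handles the empty set, it suffices to prove that each atom $\Atom(a)$ is $f$-integral. Thus the whole problem is localized to a single cyclic group $\langle a\rangle$.

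Next I would express an atom through cyclic subgroups via inclusion--exclusion. By Proposition~\ref{prop1b}, $\Atom(a)=\langle a\rangle\setminus(U_1\cup\cdots\cup U_m)$, where $U_1,\dots,U_m$ are the proper subgroups of $\langle a\rangle$; moreover each $U_j$ and every intersection $U_{j_1}\cap\cdots\cap U_{j_t}$ is again a cyclic subgroup of $\Gamma$ (a subgroup of a cyclic group, hence cyclic, and an intersection of subgroups is a subgroup). Applying the inclusion--exclusion principle to the function $f$ evaluated on these sets gives
\[
f(\Atom(a))~=~f(\langle a\rangle)~-~\sum_{\emptyset\neq J\subseteq\{1,\dots,m\}}(-1)^{|J|+1}\,f\!\Bigl(\bigcap_{j\in J}U_j\Bigr).
\]
Every term on the right-hand side is the value of $f$ on a cyclic subgroup of $\Gamma$, hence an integer by hypothesis. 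A finite integer combination of integers is an integer, so $f(\Atom(a))\in\Z$.

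Combining the two reductions yields $f(S)\in\Z$ for every $S\in B(\Gamma)$. The only real subtlety — and the point I would state carefully — is the justification that all the sets arising in the inclusion--exclusion expansion are cyclic subgroups of $\Gamma$ (so that the hypothesis applies to each term), together with the bookkeeping that the atoms in Proposition~\ref{prop1c}(3) are pairwise disjoint, so that $f$ is additive over that decomposition. Both facts are immediate from the structure already developed in Section~2, so no hard estimate is involved; the argument is essentially just linearity of $f$ combined with the Boolean/atomic description of $B(\Gamma)$. One could alternatively argue directly by structural induction on the Boolean generation of $S$ (handling complement via $f(\bar S)=f(\Gamma)-f(S)$ and union via $f(S\cup T)=f(S)+f(T)-f(S\cap T)$), but that route requires knowing $f(S\cap T)\in\Z$, which again forces one back to the atom-level statement; so the atom/inclusion--exclusion route above is the cleanest.
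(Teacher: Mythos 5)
Your proposal is correct and follows essentially the same route as the paper's own proof: reduce to atoms via Proposition~\ref{prop1c}(3), express each atom via Proposition~\ref{prop1b}, and apply inclusion--exclusion to the union of proper subgroups, noting that all intersections are cyclic subgroups. No gaps.
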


\begin{proof}
Suppose that $S\in B(\Gamma),~S\neq \emptyset$. By Proposition \ref{prop1c}, $S$ is the disjoint union of
atoms $A_1,\ldots,A_r$ of $B(\Gamma)$. Then we have $f(S)=f(A_1)+\ldots +f(A_r)$. Therefore,
it is sufficient to show that every atom is $f$-integral.
According to Proposition \ref{prop1b}, every atom $A$ with $a\in A$ has a representation
$$A~=~\left<a\right>\setminus ( U_1 \cup \ldots \cup U_k)$$
with certain cyclic subgroups $U_1,\ldots,U_k$ of $\left<a\right>$. Hence,
$$f(A)~=~f(\left<a\right>) - f( U_1 \cup \ldots \cup U_k).$$
As $f(\left<a\right>)\in \Z$, we may concentrate on $f( U_1 \cup \ldots \cup U_k)$, which can be evaluated by the
principle of inclusion and exclusion (see e.g. \cite{vanLint}).
\begin{equation}\label{eqinex}
f( U_1 \cup \ldots \cup U_k)~=~\sum_{p=1}^k(-1)^{p-1} \sum_{1\leq j_1<\ldots <j_p\leq k}~f(U_{j_1}\cap \ldots \cap U_{j_p})
\end{equation}
Since $U_{j_1}\cap \ldots \cap U_{j_p}$ is a cyclic subgroup of $\Gamma$, all terms in (\ref{eqinex}) are integers.
Hence the claim follows.
\end{proof}

Let $\Gamma$ be a finite additive group with $n$ elements, $|\Gamma|=n$.
A {\em character} $\psi$ of $\Gamma$ is a homomorphism from $\Gamma$ into the multiplicative
group of complex numbers, $\psi:\Gamma \rightarrow \C\backslash \{0\}$, 
$$\psi(\mu a + \nu b)~=~(\psi(a))^\mu~(\psi(b))^\nu \text{ for every }a,~b \in \Gamma
\text{ and }\mu,~\nu \in Z.$$
Fermat's little theorem yields
$$(\psi(a))^n~=~\psi(na)~=~\psi(0)~=~1.$$
Therefore, $\psi(a)$ is an $n$-th root of unity for every $a\in \Gamma$.

\begin{lemma}\label{lempsiH}
Let $H$ be a subgroup of $\Gamma$ and $\psi$ a character of $\Gamma$.
If $H$ contains an element $g$ with $\psi(g)\neq 1$, then $\psi(H)=0$ else $\psi(H)=|H|$.
\end{lemma}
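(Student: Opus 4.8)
The plan is to evaluate the character sum $\psi(H)=\sum_{h\in H}\psi(h)$ directly, splitting into the two cases according to whether $\psi$ is trivial on $H$ or not. First I would treat the easy case: if $\psi(g)=1$ for every $g\in H$, then every summand equals $1$ and there are $|H|$ of them, so $\psi(H)=|H|$. The substance is in the remaining case, where there exists $g\in H$ with $\psi(g)\neq 1$.

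For that case, the key trick is a translation argument. Consider the map $h\mapsto g+h$; since $H$ is a group and $g\in H$, this map is a bijection of $H$ onto itself. Hence
\[
\psi(g)\cdot\psi(H)~=~\psi(g)\sum_{h\in H}\psi(h)~=~\sum_{h\in H}\psi(g)\psi(h)~=~\sum_{h\in H}\psi(g+h)~=~\sum_{h'\in H}\psi(h')~=~\psi(H),
\]
using that $\psi$ is a homomorphism into $\C\setminus\{0\}$, so $\psi(g+h)=\psi(g)\psi(h)$. Therefore $(\psi(g)-1)\psi(H)=0$, and since $\psi(g)\neq 1$ we conclude $\psi(H)=0$.

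I do not expect a serious obstacle here: the only point requiring a little care is justifying that left translation by $g$ permutes $H$ (which is immediate from closure and cancellation in the group $H$), and that the reindexing of the sum is therefore legitimate. Both cases together give the stated dichotomy, completing the proof.
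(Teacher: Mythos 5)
Your proof is correct and is essentially identical to the paper's: both use the translation $h\mapsto h+g$ to obtain $\psi(g)\psi(H)=\psi(H)$, hence $(1-\psi(g))\psi(H)=0$, and handle the trivial case by counting summands. No differences worth noting.
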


\begin{proof}
If $g\in H$ and $\psi(g)\neq 1$, then we have
$$\psi(H)~=~\sum_{h\in H}~\psi(h+g)~=~\psi(g) \psi(H)$$
so that
$$(1-\psi(g))\psi(H)~=~0,$$
which implies $\psi(H)=0$. If $\psi(g)=1$ for every $g\in H$ then $\psi(H)=|H|$.
\end{proof}

\begin{corollary}\label{cor1}
For an arbitrary character $\psi$ of $\Gamma$ every set $S\in B(\Gamma)$ is $\psi$-integral.
\end{corollary}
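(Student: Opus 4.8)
The plan is to deduce Corollary~\ref{cor1} directly from Lemma~\ref{lem:fint} by verifying its hypothesis for the particular function $f=\psi$. That is, I would show that \emph{every cyclic subgroup of $\Gamma$ is $\psi$-integral}, and then Lemma~\ref{lem:fint} immediately gives that every $S\in B(\Gamma)$ is $\psi$-integral.

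So the only thing to check is that $\psi(H)\in\Z$ for every cyclic (indeed every) subgroup $H$ of $\Gamma$. But this is exactly the content of Lemma~\ref{lempsiH}: restricting $\psi$ to $H$ gives a character situation in which either there is some $g\in H$ with $\psi(g)\neq 1$, forcing $\psi(H)=0$, or $\psi$ is identically $1$ on $H$, giving $\psi(H)=|H|$. In both cases $\psi(H)$ is an integer, so in particular every cyclic subgroup of $\Gamma$ is $\psi$-integral.

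Combining these two observations finishes the argument: by Lemma~\ref{lempsiH} the hypothesis of Lemma~\ref{lem:fint} is satisfied with $f=\psi$, hence every $S\in B(\Gamma)$ is $\psi$-integral. There is essentially no obstacle here; the corollary is a straightforward specialization, and the real work was already done in establishing Lemma~\ref{lem:fint} (the inclusion--exclusion reduction to cyclic subgroups) and Lemma~\ref{lempsiH} (the orthogonality-type evaluation of $\psi$ on a subgroup). If I wanted a self-contained proof not routed through Lemma~\ref{lem:fint}, I would instead write $S$ as a disjoint union of atoms, each atom as $\langle a\rangle\setminus(U_1\cup\dots\cup U_k)$ with the $U_i$ cyclic, expand $\psi$ of the union by inclusion--exclusion, and apply Lemma~\ref{lempsiH} to each cyclic intersection $U_{j_1}\cap\dots\cap U_{j_p}$; but invoking Lemma~\ref{lem:fint} is cleaner.
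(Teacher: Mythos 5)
Your proposal is correct and follows exactly the paper's own argument: apply Lemma~\ref{lempsiH} to see that every (cyclic) subgroup is $\psi$-integral, then invoke Lemma~\ref{lem:fint}. No differences worth noting.
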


\begin{proof}
According to Lemma \ref{lempsiH} every subgroup $H$ of $\Gamma$ is $\psi$-integral.
Now Lemma \ref{lem:fint} implies that every set $S\in B(\Gamma)$ is $\psi$-integral.
\end{proof}

\section{Eigenvalues and Eigenvectors of Cayley graphs}

First we show that the characters of a finite group $\Gamma$ represent eigenvectors
for every Cayley graph over $\Gamma$.

\begin{lemma}\label{lemeigen}
Let $\psi$ be a character of the additive group $\Gamma = \{v_1,\ldots,v_n\},~S\subseteq \Gamma,~
-S=S$. Assume that $A=(a_{i,j})$ is the adjacency matrix of $G=\Cay(\Gamma,S)$ with
respect to the given ordering of the vertex set $V(G)=\Gamma$. Then the vector
$(\psi(v_j))_{j=1,\ldots,n}$ is an eigenvector of $A$ with eigenvalue $\psi(S)$.
\end{lemma}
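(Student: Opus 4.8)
The plan is to verify directly that $A$ times the candidate vector reproduces the vector scaled by $\psi(S)$. Write $\mathbf{x} = (\psi(v_j))_{j=1,\ldots,n}$ and compute the $i$-th entry of $A\mathbf{x}$, namely $\sum_{j=1}^n a_{i,j}\psi(v_j)$. By definition of the Cayley graph, $a_{i,j}=1$ precisely when $v_i - v_j \in S$, so the sum ranges over those $v_j$ with $v_i-v_j\in S$, i.e. over $v_j = v_i - s$ as $s$ runs through $S$ (using $-S=S$ to keep the parametrization clean, and noting $0\in S$ is allowed, matching the loop convention). Hence the $i$-th entry equals $\sum_{s\in S}\psi(v_i-s)$.

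The key step is then the homomorphism property of $\psi$: since $\psi(v_i - s) = \psi(v_i)\psi(-s) = \psi(v_i)\psi(s)^{-1}$, we can factor out $\psi(v_i)$. To land exactly on $\psi(S)=\sum_{s\in S}\psi(s)$ we use that $S$ is symmetric: the map $s\mapsto -s$ is a bijection of $S$, so $\sum_{s\in S}\psi(s)^{-1} = \sum_{s\in S}\psi(-s)^{-1}=\sum_{s\in S}\psi(s)$ (alternatively, $\psi(s)^{-1}=\overline{\psi(s)}=\psi(-s)$, and reindexing the sum over $-S=S$ gives the same value). Therefore the $i$-th entry of $A\mathbf{x}$ is $\psi(v_i)\cdot\psi(S)$, which is exactly $\psi(S)$ times the $i$-th entry of $\mathbf{x}$.

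Finally, I would note that $\mathbf{x}$ is nonzero — indeed $\psi(v_i)\neq 0$ for every $i$ since $\psi$ maps into $\C\setminus\{0\}$ — so $\mathbf{x}$ is a genuine eigenvector and $\psi(S)$ is a genuine eigenvalue of $A$. I do not anticipate a serious obstacle here; the only point requiring a little care is the bookkeeping that turns $\sum_{s\in S}\psi(s)^{-1}$ into $\psi(S)$, which is where the hypothesis $-S=S$ is actually used (without it one would get the eigenvalue $\overline{\psi(S)}$ instead, though for the later applications $\psi(S)$ will be real anyway). It is worth remarking, for use in the sequel, that as $\psi$ ranges over all $n$ characters of $\Gamma$ these eigenvectors are pairwise orthogonal and span $\C^n$, so the numbers $\psi(S)$ account for the full spectrum of $G$; but the lemma as stated only requires the single computation above.
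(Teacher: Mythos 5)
Your proof is correct and follows essentially the same direct computation as the paper; the only cosmetic difference is that the paper parametrizes the neighbours as $v_j = v_i + s$ (using $-S=S$ up front in the adjacency condition), so that $\psi(s+v_i)=\psi(s)\psi(v_i)$ yields $\psi(v_i)\psi(S)$ immediately, whereas you parametrize as $v_j = v_i - s$ and invoke the symmetry of $S$ at the end to convert $\sum_{s\in S}\psi(s)^{-1}$ into $\psi(S)$. Both are valid, and your added remarks on nonvanishing of the eigenvector and on where $-S=S$ is genuinely needed are accurate.
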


\begin{proof}
We evaluate the product of the $i$-th row of $A$ with $(\psi(v_j))_{j=1,\ldots,n}$:
\[ 
\sum_{j=1}^n~a_{i,j}\psi(v_j) = \sum_{1\leq j\leq n,~v_j-v_i\in S}~\psi(v_j) = \sum_{s\in S}~\psi(s+v_i)
= \psi(v_i) \psi(S).
\]
\end{proof}
If we refer to the characters of $\Gamma$ as eigenvectors of an arbitrary Cayley graph over
$\Gamma$, this is meant in the sense of Lemma \ref{lemeigen}.
From now on we assume that the finite additive group $\Gamma$ is abelian, $|\Gamma|=n$.
We sketch as in \cite{klotz1}, why $\Gamma$ has exactly $n$ pairwise orthogonal characters.

For an integer $m\geq 1$ we denote by $Z_m$ the additive group of integers modulo $m$,
the ring of integers modulo $m$, or simply the set $\{0,1,\ldots,m-1\}$. The particular choice
will be clear from the context. Let $\Gamma$ be represented as the direct sum of cyclic groups,  
\begin{equation}\label{eq8}
\Gamma~=~Z_{n_1}\oplus \cdots \oplus Z_{n_k},~|\Gamma|=n=n_1\cdots n_k.
\end{equation}
The elements $x\in \Gamma$ are considered as elements of the Cartesian product 
$Z_{n_1}\times \cdots \times Z_{n_k}$, 
$$x=(x_i)=(x_1,\ldots,x_k),~x_i\in Z_{n_i}=\{0,1,\ldots,n_i-1\},~1\leq i\leq k.$$
Addition is coordinatewise modulo $n_i$. 
Denote by $e_i$ the unit vector with entry 1 in position $i$ and entry 0 in all positions $j\neq i$.
A character $\psi$ of $\Gamma$ is uniquely determined by its values $\psi(e_i),~1\leq i\leq k$:
\begin{equation}\label{eq9}
x=(x_i)~=~\sum_{i=1}^k~x_ie_i,~~\psi(x)~=~\prod_{i=1}^k~(\psi(e_i))^{x_i}.
\end{equation}
As $e_i\in \Gamma$ has order $n_i$, the value $\psi(e_i)$ must be a complex $n_i$-th root of unity.
So there are $n_i$ possible choices for the value of $\psi(e_i)$. Let $\zeta_i$ be a primitive 
$n_i$-th root of unity for every $i,~1\leq i\leq k$. For every $\alpha = (\alpha_i)\in \Gamma$ a
character $\psi_\alpha$ can be uniquely defined by 
\begin{equation}\label{eq10}
\psi_\alpha(e_i)~=~\zeta_i^{\alpha_i},~1\leq i \leq k.
\end{equation}
Thus all $|\Gamma|=n$ characters of the abelian group $\Gamma$ can be obtained.

\begin{proposition}\label{prop3}
Let $\psi_1,\ldots,\psi_n$ be the distinct characters of the additive abelian group 
$\Gamma = \{v_1,\ldots ,v_n\},~S\subseteq \Gamma,~-S=S$. Assume that $A=(a_{i,j})$
is the adjacency matrix of $G=\Cay(\Gamma,S)$ with respect to the given ordering of the vertex set
$V(G)=\Gamma$. Then the vectors $(\psi_i(v_j))_{j=1,\ldots ,n},~1\leq i\leq n$, constitute
an orthogonal basis of $\C^n$ consisting of eigenvectors of $A$. To the eigenvector
 $(\psi_i(v_j))_{j=1,\ldots ,n}$ belongs the eigenvalue $\psi_i(S)$.
\end{proposition}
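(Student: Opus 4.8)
The plan is to combine Lemma~\ref{lemeigen} with the standard fact that the $n$ characters of a finite abelian group $\Gamma$ are pairwise orthogonal, and that $n$ such vectors in $\C^n$ are necessarily linearly independent, hence form a basis.

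\medskip

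First I would record the orthogonality relation. For two characters $\psi_i,\psi_j$ of $\Gamma$, consider the Hermitian inner product
\[
\langle (\psi_i(v_t))_t,\ (\psi_j(v_t))_t\rangle~=~\sum_{t=1}^n~\psi_i(v_t)\overline{\psi_j(v_t)}.
\]
Using that $\psi_i(v)$ is a root of unity, so $\overline{\psi_j(v)}=\psi_j(-v)=\psi_j(v)^{-1}$, and that the pointwise product $\psi_i\overline{\psi_j}$ is again a character $\chi$ of $\Gamma$ (here abelianness is used, so that products of characters are characters), the sum becomes $\chi(\Gamma)=\sum_{v\in\Gamma}\chi(v)$. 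By Lemma~\ref{lempsiH} applied to the subgroup $H=\Gamma$, this equals $|\Gamma|=n$ if $\chi$ is trivial and $0$ otherwise; and $\chi$ is trivial precisely when $\psi_i=\psi_j$. (Alternatively one can argue directly via the explicit description~(\ref{eq9})--(\ref{eq10}): the inner product factors over the coordinates into products of geometric sums $\sum_{x_r=0}^{n_r-1}\zeta_r^{(\alpha_r-\beta_r)x_r}$, each of which is $n_r$ if $\alpha_r=\beta_r$ and $0$ otherwise.) Hence distinct characters yield orthogonal vectors.

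\medskip

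Next, since $\psi_1,\dots,\psi_n$ are pairwise distinct, the construction in~(\ref{eq9})--(\ref{eq10}) shows there are exactly $n$ of them, and the orthogonality just established makes the $n$ vectors $(\psi_i(v_j))_{j=1,\dots,n}$ nonzero and mutually orthogonal in $\C^n$; an orthogonal family of $n$ nonzero vectors in an $n$-dimensional space is automatically a basis. Finally, Lemma~\ref{lemeigen} tells us that each such vector is an eigenvector of the adjacency matrix $A$ of $G=\Cay(\Gamma,S)$ with eigenvalue $\psi_i(S)$. This gives the desired orthogonal eigenbasis and the stated correspondence of eigenvectors to eigenvalues.

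\medskip

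I do not expect a genuine obstacle here: the proposition is essentially a bookkeeping assembly of Lemma~\ref{lemeigen}, Lemma~\ref{lempsiH}, and the character count from~(\ref{eq9})--(\ref{eq10}). The one point that needs a little care is the orthogonality computation, specifically justifying that $\psi_i\overline{\psi_j}$ is a character and that its sum over $\Gamma$ vanishes unless $\psi_i=\psi_j$; the cleanest route is the coordinatewise geometric-sum argument using the explicit form of the characters, which also re-derives the fact that the $n$ listed characters are all distinct.
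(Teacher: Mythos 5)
Your proposal is correct, and one of the two routes you sketch for the orthogonality is exactly the paper's. The paper's proof, like yours, reduces everything to Lemma~\ref{lemeigen} plus the pairwise orthogonality of the $n$ character vectors, and it establishes orthogonality by the explicit computation you list as the ``alternative'': writing $\Gamma$ as in~(\ref{eq8}), using~(\ref{eq9})--(\ref{eq10}) to factor $\sum_j \psi_\alpha(v_j)\overline{\psi_\beta(v_j)}$ into a product of geometric sums $\sum_{0\le x_i<n_i}\zeta_i^{(\alpha_i-\beta_i)x_i}$, and noting that one factor vanishes when $\alpha\neq\beta$. Your headline argument is slightly different and arguably cleaner: observe that $\psi_i\overline{\psi_j}$ is again a character (valid since the target $\C\setminus\{0\}$ is abelian) and apply Lemma~\ref{lempsiH} with $H=\Gamma$, so that the inner product is $n$ or $0$ according to whether the product character is trivial. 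This reuses machinery already proved in the paper and avoids the coordinate computation; the paper's explicit computation, on the other hand, makes Corollary~\ref{cor2} immediate in the same stroke and simultaneously confirms that the $n$ characters constructed in~(\ref{eq10}) are pairwise distinct. Either way the bookkeeping you describe (nonzero pairwise orthogonal vectors in $\C^n$ form a basis, and Lemma~\ref{lemeigen} supplies the eigenvalue $\psi_i(S)$) completes the proof, so there is no gap.
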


\begin{proof}
By Lemma \ref{lemeigen} and the considerations above it remains to prove that for 
$\alpha=(\alpha_i)\in \Gamma,~\beta=(\beta_i)\in \Gamma,~\alpha \neq \beta$, the eigenvectors
$(\psi_\alpha(v_j))_{j=1,\ldots ,n}$ and $(\psi_\beta(v_j))_{j=1,\ldots ,n}$ are orthogonal
(with respect to the standard inner product of $\C^n$).
We represent $\Gamma$ by (\ref{eq8}) and define $\psi_\alpha$ and $\psi_\beta$ according to 
(\ref{eq9}) and (\ref{eq10}). A straightforward calculation verifies that
\begin{equation}\label{eq11}
\sigma = \sum_{j=1}^n~\psi_\alpha(v_j)\overline{\psi_\beta(v_j)}
= \prod_{i=1}^k~\sum_{0\leq x_i<n_i}~\zeta_i^{(\alpha_i-\beta_i)x_i}.
\end{equation}
As $\alpha \neq \beta$ we may assume e.g. $\alpha_1 \neq \beta_1$.  Then
$$\sum_{0\leq x_1<n_1}~\zeta_1^{(\alpha_1-\beta_1)x_1}~
=~\frac{\zeta_1^{(\alpha_1-\beta_1)n_1}-1}{\zeta_1^{(\alpha_1-\beta_1)}-1}~=~0 $$
implies $\sigma = 0$ by (\ref{eq11}).
\end{proof}

\begin{corollary}\label{cor2}
Let $\psi_\alpha$ and $\psi_\beta$ be characters of the abelian group $\Gamma = \{v_1,\ldots ,v_n\}$.
Then we have
$$\sum_{j=1}^n~\psi_\alpha(v_j)\overline{\psi_\beta(v_j)}~=~\left\{
\begin{array}{l}
0\text{ for }\psi_\alpha \neq \psi_\beta\\
n\text{ for }\psi_\alpha = \psi_\beta .
\end{array}
\right. $$
\end{corollary}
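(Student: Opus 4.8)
The statement is essentially a bookkeeping consequence of Proposition \ref{prop3}, so the plan is to separate the two cases and invoke what has already been established. For $\psi_\alpha \neq \psi_\beta$, I would simply observe that the quantity in question is exactly the number $\sigma$ appearing in \eqref{eq11}, and that the argument following \eqref{eq11} — choosing a coordinate $i$ with $\alpha_i \neq \beta_i$ and summing the geometric series $\sum_{0\leq x_i<n_i}\zeta_i^{(\alpha_i-\beta_i)x_i}=0$ — shows $\sigma = 0$. Nothing new is needed here; it is a verbatim restatement of the orthogonality already proved.

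For the diagonal case $\psi_\alpha = \psi_\beta$, the plan is to use the fact recorded just before Lemma \ref{lempsiH}: by Fermat's little theorem each value $\psi_\alpha(v_j)$ is an $n$-th root of unity, hence lies on the unit circle, so $\psi_\alpha(v_j)\overline{\psi_\alpha(v_j)} = |\psi_\alpha(v_j)|^2 = 1$. The sum then consists of $n$ terms each equal to $1$, giving the value $n$. Alternatively one could note that in \eqref{eq11} with $\alpha = \beta$ every coordinate factor becomes $\sum_{0\leq x_i<n_i} 1 = n_i$, and $\prod_{i=1}^k n_i = n$; I would mention this as a cross-check but the root-of-unity observation is shorter.

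I do not anticipate a genuine obstacle: the corollary is a repackaging of Proposition \ref{prop3} together with the elementary remark that character values have modulus $1$. The only point requiring a modicum of care is to make explicit that the case distinction in the corollary matches the case distinction ($\alpha \neq \beta$ versus $\alpha = \beta$) used in the proof of Proposition \ref{prop3}, since the corollary is phrased in terms of the characters $\psi_\alpha, \psi_\beta$ rather than their index tuples; but distinct tuples $\alpha \neq \beta$ yield distinct characters by \eqref{eq9}–\eqref{eq10}, so the two formulations agree.
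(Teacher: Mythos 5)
Your proposal is correct and follows exactly the route the paper intends: the paper gives no separate proof for this corollary, treating it as an immediate consequence of the computation $\sigma=0$ in the proof of Proposition \ref{prop3} for $\psi_\alpha\neq\psi_\beta$, with the diagonal case being the trivial observation that each summand $\psi_\alpha(v_j)\overline{\psi_\alpha(v_j)}$ equals $1$ since character values are roots of unity. Your additional cross-check via \eqref{eq11} and the care about matching characters to index tuples are fine but not needed.
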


\begin{corollary}\label{cor3}
Let $\Gamma$ be a finite abelian group. For every set $S\in B(\Gamma)$ the Cayley graph
$\Cay(\Gamma,S)$ is integral.
\end{corollary}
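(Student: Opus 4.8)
The plan is to read off the spectrum of $\Cay(\Gamma,S)$ from the character decomposition already established and then invoke the integrality of $\psi(S)$. By Proposition~\ref{prop3}, the $n$ vectors $(\psi_i(v_j))_{j=1,\ldots,n}$, $1\le i\le n$, where $\psi_1,\ldots,\psi_n$ are the characters of $\Gamma$, form an orthogonal basis of $\C^n$ consisting of eigenvectors of the adjacency matrix $A$ of $\Cay(\Gamma,S)$, and the eigenvalue belonging to the $i$-th of these vectors is $\psi_i(S)$. Since these eigenvectors span $\C^n$, the characteristic polynomial of $A$ factors as $\prod_{i=1}^n\bigl(x-\psi_i(S)\bigr)$, so the eigenvalues of $\Cay(\Gamma,S)$ are precisely the numbers $\psi_1(S),\ldots,\psi_n(S)$ (counted with multiplicity).

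It then remains to show that each $\psi_i(S)$ is an integer, and this is exactly Corollary~\ref{cor1}: because $S\in B(\Gamma)$, the set $S$ is $\psi_i$-integral for every character $\psi_i$, i.e.\ $\psi_i(S)=\sum_{s\in S}\psi_i(s)\in\Z$. Combining the two observations, every eigenvalue of $\Cay(\Gamma,S)$ lies in $\Z$, which by definition makes the graph integral. Note that this argument works verbatim even when $0\in S$, so that $\Cay(\Gamma,S)$ carries a loop at every vertex, since Proposition~\ref{prop3} makes no such exclusion.

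There is essentially no obstacle left; all the substance has been front-loaded into the earlier results — Lemma~\ref{lempsiH} and Corollary~\ref{cor1} for the integrality of $\psi(S)$, and Proposition~\ref{prop3} for the diagonalization of $A$ by characters. The only point I would be slightly careful to state explicitly is that the $n$ character vectors genuinely span $\C^n$ — this is the orthogonality assertion of Proposition~\ref{prop3} (equivalently Corollary~\ref{cor2}) together with there being exactly $n$ of them — so that the list $\psi_1(S),\ldots,\psi_n(S)$ really exhausts the spectrum of $A$ and no non-integral eigenvalue can slip through.
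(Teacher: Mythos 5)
Your proof is correct and follows exactly the paper's route: Proposition~\ref{prop3} shows every eigenvalue of $\Cay(\Gamma,S)$ has the form $\psi_i(S)$, and Corollary~\ref{cor1} gives $\psi_i(S)\in\Z$ for $S\in B(\Gamma)$. The extra care you take about the character vectors spanning $\C^n$ is already contained in Proposition~\ref{prop3} and needs no further elaboration.
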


\begin{proof}
According to Proposition \ref{prop3} all eigenvalues of $\Cay(\Gamma,S)$ have the form $\psi(S)$ with a character
$\psi$ of $\Gamma$. By Corollary \ref{cor1} we know that $\psi(S)$ is integral for every $S\in B(\Gamma)$.
\end{proof}

We are going to prove the converse of Corollary \ref{cor3}.
As before, $\Gamma=\{v_1,\ldots,v_n\}$ denotes an abelian group with characters $\psi_1,\ldots,\psi_n$.
The {\em characteristic vector} $\chi_S$ of $S\subseteq \Gamma$ is defined by
$$\chi_S~=~(\chi_S(v_j)),~~\chi_S(v_j)~=~\left\{
\begin{array}{l}
1\text{, if }v_j\in S\\
0\text{, if }v_j\not\in S
\end{array} \right.~\text{ for }1\leq j\leq n.$$
The {\em character matrix} $H=(h_{i,j})$ with respect to the ordering $v_1,\ldots,v_n$ of the elements
of $\Gamma$ and the ordering $\psi_1,\ldots,\psi_n$ of the characters of $\Gamma$ is defined by
\begin{equation}\label{eq12}
h_{i,j}~=~\psi_i(v_j)\text{ for }i,j\in \{1,\ldots,n\}.
\end{equation}
Corollary \ref{cor2} implies
\begin{equation}\label{eq13}
\begin{split}
H\overline{H}^T~&=~nI_n,\\
H^{-1}~&=~\frac{1}{n} \overline{H}^T~=~\frac{1}{n}(h_{j,i}^{-1}).
\end{split}
\end{equation}
Here $I_n$ is the $n\times n$ unit matrix and $\overline{H}^T$ denotes the transpose of the
complex conjugate $\overline{H}=(\overline{h_{i,j}})$. Observe that 
$\overline{h_{i,j}}=h_{i,j}^{-1}$, because $h_{i,j}$ is an $n$-th root of unity.

\begin{lemma}\label{lemS}
Let $\Gamma$ be a finite abelian group, $S\subseteq \Gamma$, $S=-S$, $S\neq \emptyset$. 
Assume that the Cayley graph $\Cay(\Gamma,S)$ is integral.
Then every atom of $B(\Gamma)$ is either a subset of $S$ or disjoint from $S$.
\end{lemma}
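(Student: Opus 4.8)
The plan is to read off the eigenvalues of $\Cay(\Gamma,S)$ from Proposition~\ref{prop3}: integrality of the graph means $\psi(S)=\sum_{s\in S}\psi(s)\in\Z$ for \emph{every} character $\psi$ of $\Gamma$. I would then combine this with the Galois action on roots of unity and with Fourier inversion through the character matrix $H$ of (\ref{eq12})--(\ref{eq13}).

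First I would fix an integer $k$ with $\gcd(k,n)=1$, where $n=|\Gamma|$, and consider the set $kS=\{ks:s\in S\}$, the image of $S$ under the automorphism $x\mapsto kx$ of $\Gamma$. Let $\zeta$ be a primitive $n$-th root of unity; every value $\psi(s)$ is an $n$-th root of unity and hence lies in $\Q(\zeta)$, and the field automorphism $\sigma_k\in\mathrm{Gal}(\Q(\zeta)/\Q)$ with $\sigma_k(\zeta)=\zeta^k$ sends each $n$-th root of unity $\omega$ to $\omega^k$. Therefore
$$\sigma_k(\psi(S))~=~\sum_{s\in S}\psi(s)^k~=~\sum_{s\in S}\psi(ks)~=~\psi(kS).$$
Since $\psi(S)\in\Z$ is fixed by $\sigma_k$, this yields $\psi(kS)=\psi(S)$ for every character $\psi$. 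In matrix terms $\psi_i(S)=(H\chi_S)_i$ and $\psi_i(kS)=(H\chi_{kS})_i$, so $H\chi_S=H\chi_{kS}$, and invertibility of $H$ from (\ref{eq13}) forces $\chi_S=\chi_{kS}$; that is, $S=kS$ for every $k$ coprime to $n$. (Equivalently, one may phrase this without explicit Galois language by saying that the algebraic conjugates of the algebraic integer $\psi(S)$ are among the numbers $\psi(kS)$, all of which collapse to $\psi(S)$ once it is a rational integer.)

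Next I would pass from ``coprime to $n$'' to ``coprime to $\ord_\Gamma(a)$''. Given $a\in\Gamma$ and any $k'$ with $\gcd(k',\ord_\Gamma(a))=1$, the divisor $\ord_\Gamma(a)$ of $n$ and the primes dividing $n$ but not $\ord_\Gamma(a)$ are pairwise coprime, so the Chinese Remainder Theorem supplies an integer $k$ with $k\equiv k'\pmod{\ord_\Gamma(a)}$ and $k\equiv 1$ modulo each prime dividing $n$ that does not divide $\ord_\Gamma(a)$; then $\gcd(k,n)=1$ and $ka=k'a$. Recalling the description of $\Atom(a)$ as the set of generators $k'a$ of $\langle a\rangle$, it follows that if $a\in S$ then every element of $\Atom(a)$ equals $ka$ for some $k$ coprime to $n$, hence lies in $kS=S$; thus $\Atom(a)\subseteq S$.

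Finally, let $A$ be an arbitrary atom and pick $a\in A$, so $A=\Atom(a)$. If $a\in S$, the previous step gives $A\subseteq S$. If $a\notin S$ but some $b\in A\cap S$, then $\langle a\rangle=\langle b\rangle$ gives $A=\Atom(b)$, and the previous step applied to $b\in S$ yields $a\in A=\Atom(b)\subseteq S$, a contradiction; hence $A\cap S=\emptyset$. Either way $A$ is contained in $S$ or disjoint from $S$. I expect the one genuinely delicate point to be the Galois step: one must be careful that applying $\sigma_k$ turns $\psi(S)$ into the character sum over the \emph{shifted set} $kS$, rather than merely into $\psi'(S)$ for some other character $\psi'$ (which would only give cospectrality, not $S=kS$); once that identity is secured, the rest is bookkeeping with the invertible matrix $H$ and elementary number theory.
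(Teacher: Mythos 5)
Your proof is correct and rests on exactly the same mechanism as the paper's: the eigenvalues $\psi(S)$ are rational integers, hence fixed by the Galois automorphism $\zeta\mapsto\zeta^{k}$ of a cyclotomic field, and invertibility of the character matrix $H$ from (\ref{eq13}) converts equality of character sums into equality of sets. The only organizational difference is that you apply the automorphism to the forward transform in $\Q(\zeta_n)$, obtain the intermediate statement $kS=S$ for all $k$ coprime to $n$, and then bridge to $\gcd(k',\ord_\Gamma(a))=1$ via the Chinese Remainder Theorem, whereas the paper applies the automorphism $F(\xi)=\xi^{r}$ of $\Q(\xi)$, $\xi$ a primitive $m$-th root of unity with $m=\ord_\Gamma(v_k)$, directly to the inversion formula for $\chi_S(v_k)$, which makes the CRT step unnecessary.
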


\begin{proof}
Let $w=(w_i)$ denote the vector resulting from multiplication of the character matrix $H=(h_{i,j})$ 
defined by (\ref{eq12}) with the characteristic vector $\chi_S$ of $S$. Then, for $i=1,\ldots,n$, we have
$w_i=\psi_i(S)$.
According to Proposition \ref{prop3}, the entries $w_i$ of $w$ are the eigenvalues of $G=\Cay(\Gamma,S)$.
If $G$ is integral, then all entries of $w$ are integers. Using (\ref{eq13}) we solve $H\chi_S=w$ for $\chi_S$ 
and obtain
$$\chi_S~=~\frac{1}{n}\overline{H}^Tw.$$
 For an arbitrary vertex $v_k\in \Gamma=\{v_1,\ldots,v_n\}$ we have 
\begin{equation}\label{eq14}
\chi_S(v_k)~=~~\frac{1}{n}\sum_{i=1}^n(\psi_i(v_k))^{-1}w_i.
\end{equation}
Let $v_q \in \Atom(v_k)$. In order to prove the Lemma we are going to show that
$\chi_S(v_q)=\chi_S(v_k)$.
By the choice of $v_q$, we have $\left<v_q\right>=\left<v_k\right>$ and 
$\ord_\Gamma(v_q)=\ord_\Gamma(v_k)=m$ for a divisor $m$ of $n$.
This implies $v_q = r v_k$ for some $r\in \{1,\ldots,m\}$ with $\gcd(r,m)=1.$
It follows from (\ref{eq14}) that 
\begin{equation}\label{eq15}
\chi_S(v_q)~=~~\frac{1}{n}\sum_{i=1}^n(\psi_i(v_k))^{-r}w_i.
\end{equation}
Since $\ord_\Gamma(v_k)=m$ we see that
$$(\psi_i(v_k))^m=\psi_i(mv_k)=\psi_i(0)=1,$$
which means that $\psi_i(v_k)$ is an $m$-th root of unity for every $i=1,\ldots,n$.
If $\xi$ is a primitive $m$-th root of unity, then equation (\ref{eq14}) is an equation 
in the field $\Q(\xi)$ over the rationals $\Q$. As $\gcd(r,m)=1$,
we can uniquely define an automorphism $F$ of $\Q(\xi)$ by $F(\xi)=\xi^r$.
The $m$-th root of unity $\psi_i(v_k)$ is a power of $\xi$, therefore 
$$F(\psi_i(v_k))=(\psi_i(v_k))^r\text{ for }i=1,\ldots,n.$$ 
Moreover, the automorphism $F$ leaves all elements of $\Q$ unchanged. Applying $F$
to (\ref{eq14}) and observing (\ref{eq15}) we achieve
$$\chi_S(v_k)=F(\chi_S(v_k))=\frac{1}{n} \sum_{i=1}^n (\psi_i(v_k))^{-r}w_i = \chi_S(v_q).$$
\end{proof}
 
We can now confirm the result of Alperin and Peterson \cite{alperin}.

\begin{theorem}\label{thm2}
Let $\Gamma$ be a finite abelian group, $S\subseteq \Gamma,~S=-S$.
Then the Cayley graph $\Cay(\Gamma,S)$ is integral if and only if $S\in B(\Gamma)$.
\end{theorem}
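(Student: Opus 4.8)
The plan is to prove the two directions of Theorem \ref{thm2} separately, since essentially all the work has already been prepared in the preceding sections. The forward direction, namely that $S \in B(\Gamma)$ implies $\Cay(\Gamma, S)$ is integral, is immediate: this is precisely Corollary \ref{cor3}, so nothing remains to be done there beyond citing it.

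For the converse, assume $\Cay(\Gamma, S)$ is integral. I would first dispose of the trivial case $S = \emptyset$, noting that $\emptyset \in B(\Gamma)$ by convention. So assume $S \neq \emptyset$. Since $S = -S$ and $\Cay(\Gamma, S)$ is integral, Lemma \ref{lemS} applies and tells us that every atom of $B(\Gamma)$ is either contained in $S$ or disjoint from $S$. Hence $S$ is a union of atoms of $B(\Gamma)$: explicitly, $S = \bigcup \{\Atom(a) : a \in S\}$, where the inclusion $\supseteq$ holds because each such atom lies in $S$, and $\subseteq$ holds because every element $a \in S$ lies in its own atom $\Atom(a)$.

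It then remains to observe that a union of atoms of $B(\Gamma)$ is itself a member of $B(\Gamma)$. This follows from Proposition \ref{prop1c}, part 1, which states $\Atom(a) \in B(\Gamma)$ for every $a \in \Gamma$, together with the fact that $B(\Gamma)$ is a Boolean algebra and hence closed under (finite) unions. Therefore $S \in B(\Gamma)$, completing the converse and the theorem.

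There is no serious obstacle here; the theorem is really a packaging of Corollary \ref{cor3} (sufficiency) and Lemma \ref{lemS} (necessity). The only mild subtlety worth stating carefully is the equivalence between ``every atom is a subset of $S$ or disjoint from $S$'' and ``$S$ is a union of atoms'' — but since the atoms partition $\Gamma$ (as noted when atoms were introduced), this is a one-line observation. All the genuine difficulty was absorbed into the Galois-automorphism argument inside the proof of Lemma \ref{lemS}, which we are entitled to invoke.
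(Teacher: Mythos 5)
Your proposal is correct and follows essentially the same route as the paper's own proof: the forward direction by citing Corollary \ref{cor3}, and the converse by applying Lemma \ref{lemS} to conclude that $S$ is a union of atoms and hence lies in $B(\Gamma)$. The extra details you supply (the explicit decomposition $S=\bigcup\{\Atom(a):a\in S\}$ and the appeal to Proposition \ref{prop1c}) are fine but not a different argument.
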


\begin{proof}
If $S\in B(\Gamma)$, then $\Cay(\Gamma,S)$ is integral by Corollary \ref{cor3}.
To prove the converse let $\Cay(\Gamma,S)$ be integral. We may assume $S\neq \emptyset$.
Then we see by Lemma \ref{lemS} that every atom of $B(\Gamma)$ is either a subset of $S$
or is disjoint to $S$. This implies that $S$ is the union of atoms and therefore it
belongs to $B(\Gamma)$. 
\end{proof}

\section{Distance Powers and Distance Matrices}

We repeat the definition of the distance power $G^D$ of an undirected graph $G=(V,E)$
from the Introduction. Let $D$ be a set of nonnegative integers. The distance power $G^D$ has vertex set $V$.
Vertices $x,~y$ are adjacent in $G^D$, if their distance in $G$ is $d(x,y)\in D$.
If $G$ is not connected, it makes sense to allow $\infty \in D$. Clearly, $G^\emptyset$ is the graph
without edges on $V$. The edge set of $G^{\{0\}}$ consists of a single loop at every vertex of $G$.
If $G$ has no loops then $G^{\{1\}} = G$.

\begin{theorem}\label{thm3}
If $G=\Cay(\Gamma,S)$ is an integral Cayley graph over the finite abelian group $\Gamma$
and if $D$ is a set of nonnegative integers (possibly including $\infty$), 
then the distance power $G^D$ is also an integral Cayley graph over $\Gamma$. 
\end{theorem}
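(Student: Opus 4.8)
The plan is to show that $G^D$ is a Cayley graph over $\Gamma$ with a shift set lying in $B(\Gamma)$, and then to invoke Corollary \ref{cor3}. The first observation is that distances in a Cayley graph are translation-invariant: since the map $x \mapsto x + g$ is an automorphism of $G = \Cay(\Gamma, S)$ for every $g \in \Gamma$, we have $d(x,y) = d(x-y, 0)$ for all $x, y \in \Gamma$. Hence if we define, for each nonnegative integer $d$ (and for $d = \infty$), the \emph{distance shift set}
\[
S_d ~=~ \{\, g \in \Gamma :~ d(g, 0) = d \,\},
\]
then $x$ and $y$ are adjacent in $G^D$ precisely when $x - y \in S_D := \bigcup_{d \in D} S_d$. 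Since $d(g,0) = d(-g, 0)$ (a shortest path from $0$ to $g$ reversed is a shortest path from $0$ to $-g$, using $S = -S$), each $S_d$ satisfies $-S_d = S_d$, so $S_D = -S_D$ and $G^D = \Cay(\Gamma, S_D)$ is a well-defined undirected Cayley graph over $\Gamma$.

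It then remains to prove $S_D \in B(\Gamma)$. Because $B(\Gamma)$ is closed under finite unions, it suffices to show $S_d \in B(\Gamma)$ for every individual distance value $d$ (the value $d = \infty$ being the complement in $\Gamma$ of the finitely many reachable $S_d$, hence also in $B(\Gamma)$ since $B(\Gamma)$ is closed under complement). For finite $d$, the key is to describe $S_d$ in terms of iterated sums of $S$. Writing $S^{*k}$ for the $k$-fold sumset $S + S + \cdots + S$ ($k$ summands), with the convention $S^{*0} = \{0\}$, the set of vertices at distance at most $d$ from $0$ is exactly $\bigcup_{k=0}^{d} S^{*k}$: an element lies in $S^{*k}$ iff it is the endpoint of a walk of length $k$ from $0$, and the existence of a walk of length $k$ implies the existence of a path of length at most $k$ (standard). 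Therefore
\[
S_d ~=~ \Big( \bigcup_{k=0}^{d} S^{*k} \Big) ~\setminus~ \Big( \bigcup_{k=0}^{d-1} S^{*k} \Big).
\]

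The main engine is now Theorem \ref{thm1}: since $S \in B(\Gamma)$ (which holds by Theorem \ref{thm2}, as $\Cay(\Gamma,S)$ is integral), and $\{0\} = \Atom(0) \in B(\Gamma)$, an easy induction on $k$ using the closure of $B(\Gamma)$ under sums shows $S^{*k} \in B(\Gamma)$ for every $k \geq 0$. Finite unions and set differences of members of $B(\Gamma)$ are again in $B(\Gamma)$, so $S_d \in B(\Gamma)$, hence $S_D \in B(\Gamma)$, and Corollary \ref{cor3} gives that $G^D = \Cay(\Gamma, S_D)$ is integral. The only point requiring a little care — and the step I would flag as the main obstacle, though it is minor — is the handling of non-connected $G$ and the value $\infty \in D$: one must check that the "unreachable" set $S_\infty = \Gamma \setminus \bigcup_{k \geq 0} S^{*k}$ is genuinely in $B(\Gamma)$, which follows because the union stabilizes after finitely many steps (it is the subgroup, or rather the union of cosets of the subgroup, generated by $S$ — in fact $\bigcup_{k \geq 0} S^{*k}$ is the connected component of $0$, which is a subgroup when $0 \in S$ or $S$ generates it, but in general is a union of atoms) and $B(\Gamma)$ is closed under complementation. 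Apart from that bookkeeping, the argument is a direct combination of translation-invariance of distances, the sumset description of distance classes, and the sum-closure of $B(\Gamma)$.
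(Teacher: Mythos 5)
Your argument is correct and is essentially the paper's own proof: both reduce $G^D$ to $\Cay(\Gamma,S^{(D)})$ via translation invariance, describe the distance-$d$ class as the $d$-fold sumset of $S$ minus the smaller sumsets, invoke Theorem \ref{thm2} to place $S$ in $B(\Gamma)$ and Theorem \ref{thm1} plus Boolean closure to keep the symbol in $B(\Gamma)$, and treat $d=\infty$ via the complement of the subgroup $\left<S\right>=\bigcup_{k\geq 0}S^{*k}$. (Your hedge about whether $\bigcup_{k\geq 0}S^{*k}$ is a subgroup is unnecessary --- since $S=-S$ and $0\in S^{*0}$ it is exactly $\left<S\right>$ --- but your fallback stabilization argument is valid anyway, so there is no gap.)
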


\begin{proof}
If $D=\emptyset$ then $G^D=\Cay(\Gamma,\emptyset)$ is an integral Cayley graph over $\Gamma$.
We now consider the case, where $D$ has only one element,
$$D = \{d\},~~d\in \{0,1,\ldots,\infty\}.$$
In several steps we define $S^{(d)}\in B(\Gamma)$ such that $G^{\{d\}}=\Cay(\Gamma,S^{(d)})$
is an integral Cayley graph over $\Gamma$. If $d$ is a distance not attained in $G$, 
then the assertion is confirmed by $G^{\{d\}}=\Cay(\Gamma,S^{(d)})$  with $S^{(d)}=\emptyset$. 
If $d=0$ then we achieve our goal by $S^{(0)}=\{0\}$.
Suppose now that $d=\infty$ and $G$ is disconnected. If $U=\left<S\right>$ is the
subgroup generated by $S$ in $\Gamma$, then $G$ consists of disjoint subgraphs on the cosets of $U$,
all of them isomorphic to $\Cay(U,S)$. Vertices $x,y$ in $G^{\{\infty\}}$ are adjacent if and only if
they belong to different cosets of $U$, and this is true if and only if $x-y\not\in U$. Therefore,
we have
$$G^{\{\infty\}} = \Cay(\Gamma,S^{(\infty)}) 
\text{ with } S^{(\infty)}=\overline{U}=\Gamma \backslash U\in B(\Gamma).$$ 
Assume now that $d\geq 1$ is a finite distance attained between vertices $x,y$ in $G$. The sequence
of vertices in a shortest path $P$ between $x$ and $y$ in $G=\Cay(\Gamma,S)$ has the form
$$x,x+s_1,x+s_1+s_2,\ldots,x+s_1+\ldots+s_d = y,~s_i\in S \text{ for }1\leq i\leq d.$$ 
This implies $y-x = s_1+\ldots +s_d \in dS$, where $dS$ denotes the $d$-fold sum of the set $S$.
To guarantee that there is no shorter path from $x$ to $y$ than $P$ we remove from $dS$ all multiples
$kS$ for $0\leq k<d$, $0S=\{0\}$. Setting
\begin{equation}\label{eq16}
S^{(d)}~=~dS~\backslash~\bigcup_{0\leq k<d}~kS
\end{equation}
we achieve  $G^{\{d\}}=\Cay(\Gamma,S^{(d)})$.
If $G=\Cay(\Gamma,S)$ is integral, then we have $S\in B(\Gamma)$ by Theorem \ref{thm2}, $kS\in B(\Gamma)$ for
every $k\geq 2$ by Theorem \ref{thm1}, and trivially $0S=\{0\}\in B(\Gamma)$. By (\ref{eq16}) this implies 
$S^{(d)}\in B(\Gamma)$, so  $G^{\{d\}}$ is an integral Cayley graph over $\Gamma$.

To complete our proof, let 
$$D=\{d_1,\ldots,d_r\}\subseteq \{0,1,\ldots,\infty\}\text{ and } S^{(D)}=\bigcup_{i=1}^r S^{(d_i)}.$$
Then we have $S^{(D)}\in B(\Gamma)$ and $G^D=\Cay(\Gamma,S^{(D)})$ is an integral Cayley graph 
over $\Gamma$ by Theorem \ref{thm2}.
\end{proof}

Now we define a generalized distance matrix $\DM(k,G)$ of a given undirected graph $G$ with 
vertices $v_1,\ldots,v_n$ as follows. Let $d_0=0<d_1<\ldots <d_r$ be the sequence of
possible distances between vertices in $G$, possibly $d_r=\infty$. If $k=(k_0,\ldots,k_r)$ is a
vector with integral entries, then we define the entries of $\DM(k,G)=(d_{i,j}^{(k)})$ 
for $i,j\in\{1,\ldots,n\}$ by 
$$
d_{i,j}^{(k)} ~=~ k_t \text{~, if }~d(v_i,v_j)=d_t.
$$
The ordinary distance matrix $\DM(G)$ for a connected graph $G$ is established for $k=(0,1,...,r)$,
where $r$ is the diameter of $G$.

Let $\Gamma=\{v_1,\ldots,v_n\}$ be an abelian group and consider some integral Cayley graph $G=\Cay(\Gamma,S)$. 
Any generalized distance matrix $\DM(k,G)$ is an
integer weighted sum of the adjacency matrices of the graphs $G^{\{d\}}$ with $d\in\{d_0,d_1,\ldots,d_r\}$,
assuming $v_1,\ldots,v_n$ as their common vertex order.
To make it more precise, for $j=0,\ldots,r$ we denote by $A^{(j)}$ the adjacency matrix of the distance power
$G^{\{d_j\}}$, $A^{(0)}=I_n$ is the $n\times n$ unit matrix. Then we have
$$ \DM(k,G) = k_0A^{(0)}+k_1A^{(1)}+\ldots +k_rA^{(r)}.$$
By Theorem \ref{thm3}, all matrices $A^{(j)}$, $0\leq j\leq r$, are adjacency matrices of integral Cayley
graphs over $\Gamma$. According to Proposition \ref{prop3}, all Cayley graphs over $\Gamma$ have a universal
common basis of complex eigenvectors. As a result, integrality extends to $\DM(k,G)$.
This proves the following theorem.

\begin{theorem}
Let $G=\Cay(\Gamma,S)$ be an integral Cayley graph over the abelian group $\Gamma$, $|\Gamma|=n$.
Then every distance matrix $\DM(k,G)$ as defined above has integral spectrum.
Moreover, the characters $\psi_1,\ldots,\psi_n$ of $\Gamma$ represent an orthogonal basis of $\C^n$
consisting of eigenvectors of $\DM(k,G)$.
\end{theorem}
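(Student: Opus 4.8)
The plan is to exploit the decomposition already set up in the paragraph preceding the statement: $\DM(k,G)=\sum_{j=0}^r k_j A^{(j)}$, where $A^{(j)}$ is the adjacency matrix of the distance power $G^{\{d_j\}}$ taken with respect to one fixed vertex ordering $v_1,\ldots,v_n$, and $A^{(0)}=I_n$. First I would record that $G$ being integral forces $S\in B(\Gamma)$ by Theorem \ref{thm2}, whence Theorem \ref{thm3} gives $G^{\{d_j\}}=\Cay(\Gamma,S^{(d_j)})$ with $S^{(d_j)}\in B(\Gamma)$ for every $j$ (the case $d_j=\infty$ being explicitly allowed there). Each $S^{(d_j)}$ is symmetric, since distance in an undirected graph is symmetric in its arguments, so $A^{(j)}$ really is the adjacency matrix of an undirected Cayley graph over $\Gamma$.

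Next I would apply Proposition \ref{prop3} to each $A^{(j)}$ in turn: the vectors $u_i=(\psi_i(v_\ell))_{\ell=1,\ldots,n}$, $1\le i\le n$, form an orthogonal basis of $\C^n$, and $A^{(j)}u_i=\psi_i(S^{(d_j)})\,u_i$. The decisive observation is that this basis depends only on $\Gamma$ and the chosen vertex order, not on $j$; hence the $u_i$ simultaneously diagonalize all the $A^{(j)}$, and therefore also every linear combination of them. Feeding $u_i$ into $\DM(k,G)=\sum_j k_j A^{(j)}$ gives $\DM(k,G)\,u_i=\bigl(\sum_{j=0}^r k_j\,\psi_i(S^{(d_j)})\bigr)u_i$, which already establishes that $\psi_1,\ldots,\psi_n$ represent an orthogonal eigenbasis of $\DM(k,G)$ in $\C^n$.

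Finally, integrality of the spectrum follows by observing that $S^{(d_j)}\in B(\Gamma)$, so Corollary \ref{cor1} makes each $\psi_i(S^{(d_j)})$ an integer; since the $k_j$ are integers, the eigenvalue $\sum_{j=0}^r k_j\,\psi_i(S^{(d_j)})$ of $\DM(k,G)$ is an integer for every $i$, and these exhaust the spectrum. Thus the whole argument is a short assembly of Theorems \ref{thm2} and \ref{thm3}, Proposition \ref{prop3} and Corollary \ref{cor1}; the only step needing a moment's attention is insisting on one common vertex order throughout, which is precisely what licenses the claim that a single character basis diagonalizes $\DM(k,G)$.
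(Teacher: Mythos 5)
Your proof is correct and follows essentially the same route as the paper: decompose $\DM(k,G)$ as the integer combination $\sum_j k_j A^{(j)}$ of adjacency matrices of the distance powers, invoke Theorem \ref{thm3} to place each symbol $S^{(d_j)}$ in $B(\Gamma)$, and use the common character eigenbasis from Proposition \ref{prop3} together with Corollary \ref{cor1} to conclude. The only difference is that you spell out the simultaneous diagonalization and the eigenvalue formula explicitly, which the paper leaves implicit.
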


As we have seen in Theorem \ref{thm3}, the class of integral Cayley graphs over an abelian group is 
closed under distance power operations. We shall conclude this section by presenting a subclass
which has the same closure property. 

We introduce the class of {\em gcd-graphs} as in \cite{klotz2}. To this end, let the finite abelian group $\Gamma$
be represented as $\Gamma=\Z_{m_1}\oplus\ldots\oplus\Z_{m_r}$, $m_i\geq 1$ for $i=1,\ldots,r$, cf. (\ref{eq8}). 
Hence the elements $x\in \Gamma$ take the form of $r$-tuples. 
For $x=(x_1,\ldots,x_r)\in \Gamma$ and $m=(m_1,\ldots,m_r)$ we define 
$$  \gcd(x,m) = (\gcd(x_1,m_1),\ldots,\gcd(x_r,m_r)).$$
Here we agree upon $gcd(0,m_i)=m_i$. For a divisor tuple $d=(d_1,\ldots, d_r)$ of $m$, $d\mid m$, we require
$d_i\geq 1$ and $d_i\mid m_i$ for $i=1,\ldots, r$.
Every divisor tuple $d$ of $m$ defines an {\em elementary gcd-set} given by
$$
  S_\Gamma(d) = \{ x\in\Gamma:~\gcd(x,m)=d\}.
$$
Clearly, the sets $S_\Gamma(d)$ with $d\mid m$ form a partition of the elements of $\Gamma$.
We denote by $E_\Gamma(x)$ the unique elementary gcd-set that contains $x$, i.e.~$E_\Gamma(x)=S_\Gamma(d)$
with $d=\gcd(x,m)$.
A {\em gcd-set} is a union of elementary gcd-sets. By construction, the elementary gcd-sets are 
the atoms of the Boolean algebra $B_{\gcd}(\Gamma)$ consisting of all gcd-sets of $\Gamma$. 
According to Theorem~1 in \cite{klotz2}, $B_{\gcd}(\Gamma)$ is a Boolean sub-algebra of
$B(\Gamma)$. Hence by Theorem \ref{thm2}, all gcd-graphs $Cay(\Gamma,S)$, $S\in B_{gcd}(\Gamma)$, are integral.

\begin{lemma}\label{lem:esum}
If $\Gamma=\Z_{m_1}\oplus\ldots\oplus\Z_{m_r}$ and $x=(x_1,\ldots,x_r)\in\Gamma$ then
$$ E_\Gamma(x)=E_{\Z_{m_1}}(x_1)\times\ldots\times E_{\Z_{m_r}}(x_r).$$
\end{lemma}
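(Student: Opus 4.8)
The plan is to unwind the definitions on both sides and check that an arbitrary element $y=(y_1,\ldots,y_r)\in\Gamma$ lies in the left-hand side if and only if it lies in the right-hand side. By definition $E_\Gamma(x)=S_\Gamma(d)$ with $d=\gcd(x,m)=(\gcd(x_1,m_1),\ldots,\gcd(x_r,m_r))$, so $y\in E_\Gamma(x)$ means precisely $\gcd(y,m)=\gcd(x,m)$, i.e.\ $\gcd(y_i,m_i)=\gcd(x_i,m_i)$ for every $i=1,\ldots,r$ (again using the convention $\gcd(0,m_i)=m_i$). On the other hand, $y$ belongs to the Cartesian product $E_{\Z_{m_1}}(x_1)\times\cdots\times E_{\Z_{m_r}}(x_r)$ exactly when $y_i\in E_{\Z_{m_i}}(x_i)$ for every $i$, and $E_{\Z_{m_i}}(x_i)=S_{\Z_{m_i}}(\gcd(x_i,m_i))$ is by definition the set of $y_i\in\Z_{m_i}$ with $\gcd(y_i,m_i)=\gcd(x_i,m_i)$. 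Thus both memberships reduce to the same coordinatewise condition, and the two sets coincide.

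In more structural terms, the key observation is that the map $x\mapsto\gcd(x,m)$ factors coordinatewise through the direct sum decomposition $\Gamma=\Z_{m_1}\oplus\cdots\oplus\Z_{m_r}$: the $i$-th coordinate of $\gcd(x,m)$ depends only on $x_i$ and equals $\gcd(x_i,m_i)$, which is exactly the gcd-value defining the partition of $\Z_{m_i}$. Since the elementary gcd-sets of $\Gamma$ are the fibres of $x\mapsto\gcd(x,m)$ and the elementary gcd-sets of $\Z_{m_i}$ are the fibres of $x_i\mapsto\gcd(x_i,m_i)$, the fibre over $d=(d_1,\ldots,d_r)$ in $\Gamma$ is simply the product of the fibres over $d_i$ in the factors $\Z_{m_i}$. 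Applying this with $d=\gcd(x,m)$ yields the claimed identity.

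I do not expect any genuine obstacle here; the statement is essentially a bookkeeping lemma whose only subtlety is the nonstandard convention $\gcd(0,m_i)=m_i$, which must be invoked consistently so that the divisor tuple $\gcd(x,m)$ is well defined with all entries $\geq 1$ and dividing the corresponding $m_i$. The one point worth stating explicitly is that addition in $\Gamma$ is coordinatewise, so that ``$x_i\in\Z_{m_i}$'' and the reduction modulo $m_i$ are compatible with the product structure; this is already recorded in the paper's setup for \eqref{eq8}. Hence the proof is a short two-line verification of the coordinatewise equivalence described above.
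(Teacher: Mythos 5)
Your proof is correct and follows essentially the same route as the paper: both unwind the definition of $E_\Gamma(x)$ as the fibre of $y\mapsto\gcd(y,m)$ over $d=\gcd(x,m)$ and observe that the membership condition $\gcd(y_i,m_i)=d_i$ is coordinatewise, hence the fibre is the product of the coordinate fibres. No gaps; the remarks on the convention $\gcd(0,m_i)=m_i$ are a nice touch but not essential.
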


\begin{proof}
Let $m=(m_1,\ldots,m_r)$ and $d=(d_1,\ldots,d_r)=\gcd(x,m)$.
Then we have $y=(y_1,\ldots,y_r)\in E_\Gamma(x)$ if and only if $\gcd(y_i,m_i)=d_i$ for $i=1,\ldots,r$.
This is equivalent to $y\in S_{\Z_{m_1}}(d_1)\times\ldots\times S_{\Z_{m_r}}(d_r)$, which is the 
same as $y\in E_{\Z_{m_1}}(x_1)\times\ldots\times E_{\Z_{m_r}}(x_r)$.
\end{proof}

\begin{lemma}\label{lem:gcdsetsum}
For every finite abelian group $\Gamma$, any sum of its gcd-sets is again a gcd-set.
\end{lemma}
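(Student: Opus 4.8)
The plan is to reduce the statement in two steps: first to the sum of two \emph{elementary} gcd-sets, and then to the case of a cyclic group, where Lemma~\ref{lem:atomsum} applies directly. Since every gcd-set is a union of elementary gcd-sets and $(A\cup B)+C=(A+C)\cup(B+C)$, it suffices to show that $E_\Gamma(x)+E_\Gamma(y)$ is a gcd-set for all $x,y\in\Gamma$. Write $\Gamma=\Z_{m_1}\oplus\ldots\oplus\Z_{m_r}$, $x=(x_1,\ldots,x_r)$, $y=(y_1,\ldots,y_r)$. By Lemma~\ref{lem:esum} the two elementary gcd-sets are Cartesian products over the coordinates, and since addition in $\Gamma$ is coordinatewise the sum factors as
\[
E_\Gamma(x)+E_\Gamma(y)~=~\bigl(E_{\Z_{m_1}}(x_1)+E_{\Z_{m_1}}(y_1)\bigr)\times\ldots\times\bigl(E_{\Z_{m_r}}(x_r)+E_{\Z_{m_r}}(y_r)\bigr),
\]
so everything comes down to the cyclic case.

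For $\Gamma=\Z_m$ and divisors $d_1,d_2$ of $m$, I claim $S_{\Z_m}(d_1)+S_{\Z_m}(d_2)$ is a gcd-set. The key observation is that in a cyclic group the elementary gcd-sets coincide with the atoms of $B(\Z_m)$: since $\left<a\right>=\left<\gcd(a,m)\right>$, the generators of $\left<a\right>$ are exactly the elements $x$ with $\gcd(x,m)=\gcd(a,m)$, that is $\Atom(a)=S_{\Z_m}(\gcd(a,m))$; consequently $B_{\gcd}(\Z_m)=B(\Z_m)$. Hence $S_{\Z_m}(d_1)+S_{\Z_m}(d_2)$ is a sum of two atoms, which lies in $B(\Z_m)=B_{\gcd}(\Z_m)$ by Lemma~\ref{lem:atomsum}, so it is a gcd-set and therefore a union of elementary gcd-sets of $\Z_m$.

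Feeding this back into the displayed factorization, each factor is a union of elementary gcd-sets of $\Z_{m_i}$; expanding the Cartesian product of these unions and applying Lemma~\ref{lem:esum} in the opposite direction (a product of elementary gcd-sets, one per coordinate, is an elementary gcd-set of $\Gamma$) exhibits $E_\Gamma(x)+E_\Gamma(y)$ as a union of elementary gcd-sets of $\Gamma$, hence a gcd-set, and the general case follows by the distributivity remark above. The only genuine content is the cyclic case, and even that reduces to Lemma~\ref{lem:atomsum} once one notices the identification of atoms with elementary gcd-sets; the main (minor) obstacle I anticipate is simply bookkeeping the repeated passages between Cartesian products and unions cleanly.
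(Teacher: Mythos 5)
Your proof is correct and follows essentially the same route as the paper: reduce to sums of two elementary gcd-sets, factor coordinatewise via Lemma~\ref{lem:esum}, settle the cyclic case through Lemma~\ref{lem:atomsum}, and reassemble the Cartesian products of unions. The only difference is that you prove the identification $B_{\gcd}(\Z_m)=B(\Z_m)$ directly (via $\Atom(a)=S_{\Z_m}(\gcd(a,m))$), where the paper cites Theorem~3 of \cite{klotz2}; your inline argument is valid.
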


\begin{proof}
As in the proof of Theorem \ref{thm1} it suffices to show that any sum of elementary gcd-sets is a gcd-set.
If $\Gamma$ is cyclic, then $B_{gcd}(\Gamma)=B(\Gamma)$ (see Theorem 3 in \cite{klotz2}) and the result follows from 
Lemma \ref{lem:atomsum}.

Now let $\Gamma=\Z_{m_1}\oplus\ldots\oplus\Z_{m_r}$, $m=(m_1,\ldots,m_r)$, $r\geq 2$. Further let 
$x=(x_1,\ldots,x_r)\in \Gamma$, $\gcd(x,m)=d=(d_1,\ldots,d_r)$ and 
let $y=(y_1,\ldots,y_r)\in\Gamma$, $\gcd(y,m)=\delta=(\delta_1,\ldots,\delta_r)$. 
By Lemma \ref{lem:esum} we have
$$  E_\Gamma(x)+E_\Gamma(y) = (E_{\Z_{m_1}}(x_1)+E_{\Z_{m_1}}(y_1))\times\ldots\times(E_{\Z_{m_r}}(x_r)+E_{\Z_{m_r}}(y_r)).$$
Since the cyclic case is already solved, it follows that $E_{\Z_{m_i}}(x_i)+E_{\Z_{m_i}}(y_i)$ is a gcd-set of $\Z_{m_i}$ 
for $i=1,\ldots,r$. Hence $E_{\Z_{m_i}}(x_i)+E_{\Z_{m_i}}(y_i)$ is a disjoint union of elementary
gcd-sets $E_{\Z_{m_i}}(z^{(i)}_1),\ldots,E_{\Z_{m_i}}(z^{(i)}_{\varrho_i})$, with $z^{(i)}_j\in\Z_{m_i}$ for $j=1,\ldots,\varrho_i$.
It follows that
\[
  E_\Gamma(x)+E_\Gamma(y)=\bigcup\limits_{1\leq j_k \leq \varrho_k,~k=1,\ldots,r} 
\left( E_{\Z_{m_1}}(z^{(1)}_{j_1}) \times \ldots \times E_{\Z_{m_r}}(z^{(r)}_{j_r}) \right).
\]
Writing $z^{(j_1,\ldots,j_r)}=(z^{(1)}_{j_1},\ldots,z^{(r)}_{j_r})$, we get by Lemma \ref{lem:esum}
$$ E_\Gamma(x)+E_\Gamma(y)=\bigcup\limits_{1\leq j_k\leq \varrho_k,~k=1,\ldots,r} 
E_\Gamma(z^{(j_1,\ldots,j_r)}) ~\in B_{gcd}(\Gamma).$$ 
\end{proof}

The following theorem is readily deduced from Lemma \ref{lem:gcdsetsum} 
applying the same reasoning as in the proof of Theorem \ref{thm3}.

\begin{theorem}\label{thm:gcddist}
If $G=\Cay(\Gamma,S)$ is a gcd-graph over $\Gamma = \Z_{m_1}\oplus\ldots\oplus\Z_{m_r}$
and if $D$ is a set of nonnegative integers (possibly including $\infty$), 
then the distance power $G^D$ is also a gcd-graph over $\Gamma$. 
\end{theorem}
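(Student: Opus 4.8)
The plan is to transcribe the proof of Theorem~\ref{thm3} almost verbatim, replacing $B(\Gamma)$ by $B_{\gcd}(\Gamma)$ throughout and the appeal to Theorem~\ref{thm1} by one to Lemma~\ref{lem:gcdsetsum}. What makes this legitimate is that the identities expressing $G^D$ as a Cayley graph, established in the proof of Theorem~\ref{thm3}, are purely combinatorial and make no use of integrality: for \emph{any} Cayley graph $\Cay(\Gamma,S)$ one has $G^{\{0\}}=\Cay(\Gamma,\{0\})$; $G^{\{\infty\}}=\Cay(\Gamma,\Gamma\setminus\langle S\rangle)$ when $G$ is disconnected; $G^{\{d\}}=\Cay(\Gamma,\emptyset)$ if $d$ is a distance not attained in $G$; $G^{\{d\}}=\Cay(\Gamma,S^{(d)})$ with $S^{(d)}$ given by (\ref{eq16}) for a finite attained $d\ge 1$; and $G^{D}=\Cay(\Gamma,\bigcup_{i=1}^{r}S^{(d_i)})$ for $D=\{d_1,\ldots,d_r\}$. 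So it suffices to show that each of these symbols lies in $B_{\gcd}(\Gamma)$ whenever $S$ does.

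First I would collect the permissible closure operations. By Theorem~1 of \cite{klotz2}, $B_{\gcd}(\Gamma)$ is a Boolean algebra, hence closed under finite unions, finite intersections and complementation in $\Gamma$; by Lemma~\ref{lem:gcdsetsum} it is closed under taking sums of sets, hence also, by iteration, under the map $S\mapsto kS$ for every integer $k\ge 1$; and $0S=\{0\}$ is the elementary gcd-set $S_\Gamma(m)$ with $m=(m_1,\ldots,m_r)$. Consequently $\langle S\rangle=\bigcup_{k\ge 0}kS$ is a finite union of gcd-sets --- the increasing union $\bigcup_{k=0}^{N}kS$ equals $\langle S\rangle$ for $N$ large enough, $\Gamma$ being finite --- so $\langle S\rangle\in B_{\gcd}(\Gamma)$, and therefore $\Gamma\setminus\langle S\rangle\in B_{\gcd}(\Gamma)$ as well.

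Then I would run through the single-distance cases. The symbols $\emptyset$, $\{0\}$ and $\Gamma\setminus\langle S\rangle$ lie in $B_{\gcd}(\Gamma)$ by the preceding paragraph. For a finite attained distance $d\ge 1$ the set $S^{(d)}=dS\setminus\bigcup_{0\le k<d}kS$ is obtained from the gcd-sets $0S,1S,2S,\ldots,dS$ by one finite union and one set difference, hence $S^{(d)}\in B_{\gcd}(\Gamma)$. Finally, for a general finite $D$, the symbol $S^{(D)}=\bigcup_{i=1}^{r}S^{(d_i)}$ is a finite union of elements of $B_{\gcd}(\Gamma)$, so $S^{(D)}\in B_{\gcd}(\Gamma)$ and $G^{D}=\Cay(\Gamma,S^{(D)})$ is by definition a gcd-graph over $\Gamma$.

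I do not expect a genuine difficulty. The single place where the argument is not a word-for-word copy of the proof of Theorem~\ref{thm3} is the disconnected case: there one may no longer say ``$\langle S\rangle$ is a subgroup, hence in $B_{\gcd}(\Gamma)$'' --- not every subgroup is a gcd-set --- and must instead invoke $\langle S\rangle=\bigcup_{k\ge 0}kS$ together with Lemma~\ref{lem:gcdsetsum}. Everything else is exactly the bookkeeping of Theorem~\ref{thm3}, now carried out inside the sub-algebra $B_{\gcd}(\Gamma)$.
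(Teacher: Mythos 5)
Your proposal is correct and takes essentially the same route as the paper, which gives no separate argument at all but simply states that the theorem is ``readily deduced from Lemma~\ref{lem:gcdsetsum} applying the same reasoning as in the proof of Theorem~\ref{thm3}.'' Your extra observation that the disconnected case cannot reuse ``subgroups lie in the algebra'' (since not every subgroup of $\Gamma$ is a gcd-set) and must instead write $\langle S\rangle=\bigcup_{k\ge 0}kS$ as a finite union of gcd-sets is a genuine detail the paper glosses over, and you resolve it correctly.
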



\end{document}